\newtheorem{thm}{Theorem}[section]
\newtheorem{cor}[thm]{Corollary}
\newtheorem{lem}[thm]{Lemma}
\newtheorem{prop}[thm]{Proposition}
\theoremstyle{definition}
\newtheorem{defin}[thm]{Definition}
\newtheorem{rem}[thm]{Remark}
\numberwithin{equation}{section}
\newcommand{\Hom}{{\rm Hom}}
\newcommand{\Ext}{{\rm Ext}}
\newcommand{\HH}{{\rm HH}}
\newcommand{\rad}{{\rm rad\,}}
\newcommand{\Ker}{{\rm Ker\,}}
\newcommand{\Ima}{{\rm Im\,}}
\begin{document}

\title[Hochschild cohomology of a self-injective algebra]
{On Hochschild cohomology of a self-injective special biserial algebra 
obtained by a circular quiver with double arrows}

\author[A. Itaba]
{Ayako Itaba}
\address{
Ayako Itaba\\
Department of Mathematics, Tokyo University of Science \\
Kagurazaka 1-3, Shinjuku, Tokyo 162-0827, Japan}
\email{j1110701@ed.tus.ac.jp}

\date{\today}

\begin{abstract}
  We calculate the dimensions of the Hochschild cohomology groups of 
  a self-injective special biserial algebra $\Lambda_{s}$ 
  obtained by a circular quiver with double arrows. 
  Moreover, we give a presentation 
  of the Hochschild cohomology ring modulo nilpotence 
  of $\Lambda_{s}$ by generators and relations. 
  This result shows that the Hochschild cohomology ring 
  modulo nilpotence of $\Lambda_{s}$ is finitely generated as an algebra. 
\end{abstract}

\subjclass[2010]{16D20, 16E40, 16G20.}

\keywords{Koszul algebra, special biserial algebra, self-injective algebra, Hochschild cohomology. }

\maketitle
\section{Introduction}
Let $K$ be an algebraically closed field. 
For a positive integer $s$, 
let $\Gamma_{s}$ be the following circular quiver with double arrows: 
 \begin{center}
  \includegraphics[scale=0.9]{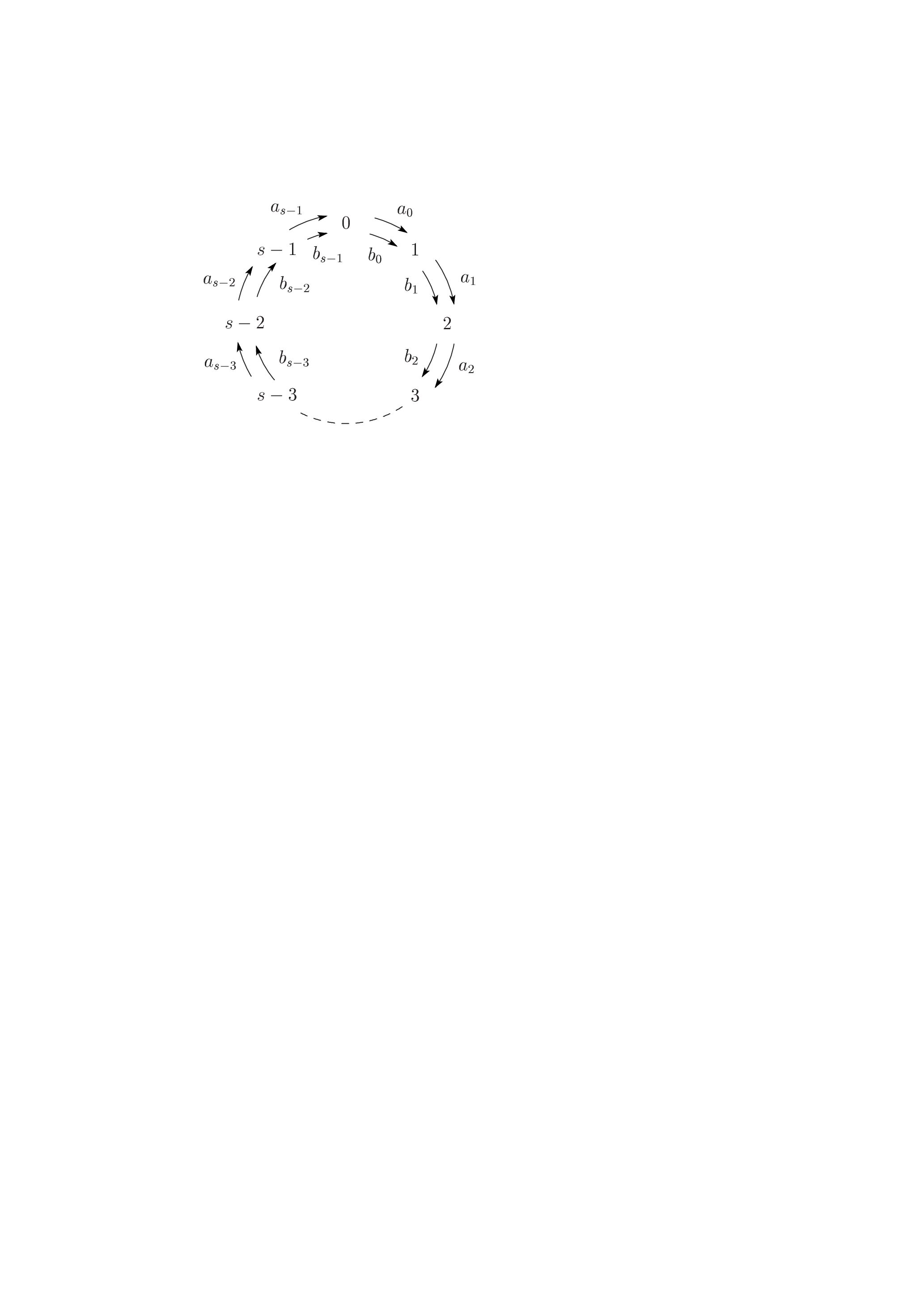}
 \end{center}
Denote the trivial path at the vertex $i$ by $e_{i}$. 
We set the elements $x=\sum_{i=0}^{s-1} a_{i}$ 
and $y=\sum_{i=0}^{s-1} b_{i}$ in the path algebra $K\Gamma_{s}$. 
Then $e_{i}x^{n}=x^{n}e_{i+n}=e_{i}x^{n}e_{i+n}$ 
and $e_{i}y^{n}=y^{n}e_{i+n}=e_{i}y^{n}e_{i+n}$ hold 
for $0\leq i \leq s-1$ and $n\geq 0$, 
where the subscript $i+n$ of $e_{i+n}$ is regarded as modulo $s$. 
We denote by $I$ the ideal generated by $x^{2}$, $xy+yx$ and $y^{2}$, 
that is, 
$I=\langle x^{2},xy+yx,y^{2}\rangle=\langle e_{i}x^{2},e_{i}(xy+yx),e_{i}y^{2}\ |\ 0\leq i \leq s-1 \rangle$. 
Then we define the bound quiver algebra $\Lambda_{s}=K\Gamma_{s}/I$ over $K$. 
This algebra $\Lambda_{s}$ is a Koszul self-injective special biserial algebra 
for $s\geq 1$ (see Proposition 2.2), 
but is not a weakly symmetric algebra for $s\geq 3$. 
Our purpose in this paper is to study the Hochschild cohomology 
of $\Lambda_{s}$ for $s\geq 3$. 

We immediately see that $\Lambda_{1}$ is the exterior algebra 
in two variables $K[x,y]/\langle x^{2},xy+yx,y^{2}\rangle$, 
and Xu and Han have studied the Hochschild cohomology 
groups and rings of the exterior algebras 
in arbitrary variables in \cite{XH}. 
Also, in \cite{ST} and \cite{F},
the Hochschild cohomology groups and rings for classes of 
some self-injective special biserial algebras have been studied. 
We notice that these classes contain algebras 
isomorphic to $\Lambda_{2}$ and $\Lambda_{4}$. 

In \cite{GHMS}, 
Green, Hartmann, Marcos and Solberg constructed 
a minimal projective bimodule resolution for any Koszul algebra 
by using sets $\mathcal{G}^{n}$ $(n\geq 0)$ introduced 
by Green, Solberg and Zacharia in \cite{GSZ}. 
Moreover, by using this method, 
minimal projective bimodule resolutions of several weakly symmetric algebras 
are constructed in \cite{FO}, \cite{ST} and \cite{ScSn}. 
In this paper, by the same method, 
we give a minimal projective bimodule resolution of $\Lambda_{s}$ 
for $s\geq 1$ and compute the Hochschild cohomology group 
$\HH^{n}(\Lambda_{s})$ of $\Lambda_{s}$ for $n\geq 0$ 
in the case where $s\geq 3$. 

In \cite{SnSo}, Snashall and Solberg have defined 
the support varieties of finitely generated modules 
over a finite-dimensional algebra 
by using the Hochschild cohomology ring modulo nilpotence, 
which are analogous to the support varieties 
for group algebras of finite groups. 
Furthermore, in \cite{SnSo}, Snashall and Solberg have conjectured 
that the Hochschild cohomology ring modulo nilpotence 
is finitely generated as an algebra. 
So far, it has been proved that 
the Hochschild cohomology rings modulo nilpotence of 
the following classes of finite-dimensional algebras 
are finitely generated as algebras: 
group algebras of finite groups (\cite{E}); 
self-injective algebras of finite representation type (\cite{GSS1}); 
monomial algebras (\cite{GSS2}); 
several self-injective special biserial algebras 
(\cite{ES}, \cite{F}, \cite{ScSn}, \cite{ST}). 
However, in \cite{X}, Xu has found 
a counterexample to this conjecture 
(see also \cite{S}, \cite{XZ}). 
In this paper, we give generators and relations 
of the Hochschild cohomology ring modulo nilpotence 
$\HH^{\ast}(\Lambda_{s})/\mathcal{N}_{\Lambda_{s}}$ 
of $\Lambda_{s}$ for all $s\geq 3$, 
and show that $\HH^{\ast}(\Lambda_{s})/\mathcal{N}_{\Lambda_{s}}$ 
is finitely generated as an algebra. 

This paper is organized as follows: 
In Section 2, we give sets $\mathcal{G}^{n}$ $(n\geq 0)$ 
for the right $\Lambda_{s}$-module $\Lambda_{s}/\rad \Lambda_{s}$. 
Moreover, by using $\mathcal{G}^{n}$, 
we construct a minimal projective resolution of $\Lambda_{s}$ 
as a $\Lambda_{s}$-$\Lambda_{s}$-bimodule (see Theorem 2.6). 
In Section 3, we find a $K$-basis of the Hochschild cohomology group 
$\HH^{n}(\Lambda_{s})$ (see Proposition 3.8) 
and describe the dimension of $\HH^{n}(\Lambda_{s})$ 
for $n\geq 0$ and $s\geq 3$ (see Theorem 3.9). 
In Section 4, we investigate the Hochschild cohomology ring modulo nilpotence 
$\HH^{\ast}(\Lambda_{s})/\mathcal{N}_{\Lambda_{s}}$ for $s\geq 3$. 
In particular, it is shown that $\HH^{\ast}(\Lambda_{s})/\mathcal{N}_{\Lambda_{s}}$ 
is finitely generated as an algebra for $s\geq 3$ (see Theorem 4.1). 

Throughout this paper, for all arrows $a$ of $\Gamma_{s}$, 
we denote the origin of $a$ by $o(a)$ and the terminus of $a$ by $t(a)$. 
Also, we denote the enveloping algebra 
$\Lambda_{s}^{\text{op}}\otimes_{K} \Lambda_{s}$ 
of $\Lambda_s$ by $\Lambda_{s}^{e}$. 
Note that there is a natural one to one correspondence 
between the family of $\Lambda_{s}$-$\Lambda_{s}$-bimodules 
and that of right $\Lambda_{s}^{e}$-modules. 
For the general notation, 
we refer to \cite{ASS}. 
\section{Sets $\mathcal{G}^{n}$ for $\Lambda_{s}/\rad\Lambda_{s}$, 
         and a projective bimodule resolution $(Q^{\bullet},\partial^{\bullet})$ 
         of $\Lambda_{s}$}
Let $A=KQ/I$ be a finite-dimensional $K$-algebra, 
where $Q$ is a finite quiver and $I$ is an admissible ideal of $KQ$. 
We start by recalling the construction of sets 
$\mathcal{G}^{n}$ $(n\geq 0)$ in \cite{GSZ}. 
Let $\mathcal{G}^{0}$ be the set of vertices of $Q$, 
$\mathcal{G}^{1}$ the set of arrows of $Q$, 
and $\mathcal{G}^{2}$ a minimal set of uniform generators of $I$. 
In \cite{GSZ}, Green, Solberg and Zacharia proved that, for each $n\geq 3$, 
we have a set $\mathcal{G}^{n}$ consisting of uniform elements in $KQ$ 
such that there is a minimal projective resolution $(P^{\bullet},d^{\bullet})$ 
of the right $A$-module $A/\rad A$ satisfying 
the following conditions (a), (b) and (c): 
 \begin{enumerate}[(a)]
  \item For each $n\geq 0$, $P^{n}=\bigoplus_{g\in \mathcal{G}^{n}}t(x)A$. 
  \item For each $g\in \mathcal{G}^{n}$, 
        we have unique elements $r_{h},s_{k}\in KQ$, 
        where $h\in \mathcal{G}^{n-1}$ and $k\in \mathcal{G}^{n-2}$, 
        satisfying 
        $g=\sum_{h\in \mathcal{G}^{n-1}}hr_{h}=\sum_{k\in \mathcal{G}^{n-2}}ks_{k}$. 
  \item For each $n \geq 1$, $d^{n}:P^{n}\rightarrow P^{n-1}$ 
        is determined by 
        $d^{n}(t(g)\lambda)=\sum_{h\in \mathcal{G}^{n-1}}r_{h}t(g)\lambda$ 
        for $g\in \mathcal{G}^{n}$ and $\lambda\in A$, 
        where $r_{h}$ denotes the element in (b). 
 \end{enumerate}

In \cite{GHMS}, a minimal projective bimodule resolution of any Koszul algebra is given 
by using the above sets $\mathcal{G}^{n}$ $(n\geq 0)$. 
In this section, we construct sets $\mathcal{G}^{n}$ $(n\geq 0)$ 
for the right $\Lambda_{s}$-modules $\Lambda_{s}/\rad \Lambda_{s}$, 
and then we give a minimal projective bimodule resolution 
$(Q^{\bullet},\partial^{\bullet})$ of $\Lambda_{s}$ by following \cite{GHMS}. 
\subsection{Sets $\mathcal{G}^{n}$ for $\Lambda_{s}/\rad\Lambda_{s}$}
In order to construct sets $\mathcal{G}^{n}$ for $\Lambda_{s}/\rad \Lambda_{s}$, 
we define the following elements $g_{i,j}^{n}$ in $K\Gamma_{s}$: 
 \begin{defin}
  For $0\leq i \leq s-1$, we put $g_{i,0}^{0}:=e_{i}$, and, 
  for $n\geq 1$, we inductively define the elements 
  $g_{i,j}^{n}\in K\Gamma_{s}$ as follows{\rm :} 
   \begin{itemize}
    \item $g_{i,0}^{n}:= g_{i,0}^{n-1}y$\, for $0 \leq i \leq s-1$, 
    \item $g_{i,j}^{n}:= g_{i,j-1}^{n-1}x + g_{i,j}^{n-1}y$\, 
          for $0 \leq i \leq s-1$ \text{ and } $1\leq j \leq n-1$, 
    \item $g_{i,n}^{n}:= g_{i,n-1}^{n-1}x$ \, for $0 \leq i \leq s-1$, 
   \end{itemize}
 \end{defin}
Throughout this paper, 
we regard the subscript $i$ 
of $g_{i,\bullet}^{\bullet}$ as modulo $s$. 
Then we see that these elements $g_{i,j}^{n}$ are uniform, 
and that $o(g_{i,j}^{n})=e_{i}$ and $t(g_{i,j}^{n})=e_{i+n}$ 
hold for all $n\geq 0$, $i\in \mathbb{Z}$, and $0\leq j\leq n$. 

We put the set 
 \[
  \mathcal{G}^{n}=\{ g_{i,j}^{n} \mid 0 \leq i \leq s-1,\, 0\leq j \leq n\}
  \]
for all $n \geq 0$. 
It is easy to check that these sets satisfy the conditions 
(a), (b) and (c) in the beginning of this section. 

Now, we have the following proposition. 
 \begin{prop}
  For $s\geq 1$, the algebra $\Lambda_{s}$ is 
  a Koszul self-injective special biserial algebra. 
 \end{prop}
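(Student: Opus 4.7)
My plan is to verify the three claimed properties separately, in the order \emph{special biserial}, \emph{self-injective}, \emph{Koszul}, since each step will inform the next.

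For the special biserial part, I would simply check the standard combinatorial conditions on the bound quiver $(\Gamma_{s}, I)$: every vertex of $\Gamma_{s}$ has exactly two incoming and two outgoing arrows, and each arrow admits at most one continuation (resp.\ precursor) avoiding the monomial relations $a_{i}a_{i+1}$ and $b_{i}b_{i+1}$ extracted from $x^{2}, y^{2}\in I$. The remaining generator $xy+yx$ of $I$ contributes binomial relations $a_{i}b_{i+1}+b_{i}a_{i+1}$ of commutativity type, so $I$ is a special biserial ideal in the sense of \cite{ASS}.

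For self-injectivity, I would compute the indecomposable projective $P_{i}=e_{i}\Lambda_{s}$ directly. Using the relations $a_{i}a_{i+1}=0$, $b_{i}b_{i+1}=0$, and $a_{i}b_{i+1}=-b_{i}a_{i+1}$, a short enumeration of paths starting at $i$ shows that $P_{i}$ has $K$-basis $\{e_{i},\,a_{i},\,b_{i},\,a_{i}b_{i+1}\}$ and in particular $\rad^{3}\Lambda_{s}=0$. A direct check that $\rad\cdot a_{i}b_{i+1}=0$ while the remaining basis vectors are not annihilated by $\rad$ yields $\operatorname{soc}(P_{i})=K\,a_{i}b_{i+1}\cong S_{i+2}$, which is simple. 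Therefore $\nu\colon i\mapsto i+2\pmod{s}$ is a Nakayama permutation, so $\Lambda_{s}$ is self-injective (even Frobenius); note $\nu=\mathrm{id}$ precisely when $s\in\{1,2\}$, matching the remark in the introduction that weak symmetry fails for $s\geq 3$.

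For Koszulity, observe that $I$ is generated by homogeneous elements of degree $2$, so $\Lambda_{s}$ inherits a path-length grading with semisimple degree-$0$ part; Koszulity is then equivalent to linearity of the minimal graded projective resolution of $\Lambda_{s}/\rad\Lambda_{s}$. By induction on $n$, each $g_{i,j}^{n}$ of Definition 2.1 is homogeneous of degree $n$ in $K\Gamma_{s}$, because the recursions $g_{i,0}^{n}=g_{i,0}^{n-1}y$, $g_{i,j}^{n}=g_{i,j-1}^{n-1}x+g_{i,j}^{n-1}y$, and $g_{i,n}^{n}=g_{i,n-1}^{n-1}x$ each multiply a degree-$(n-1)$ element by a degree-$1$ arrow. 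Since conditions (a)--(c) yield a minimal projective resolution $(P^{\bullet},d^{\bullet})$ of $\Lambda_{s}/\rad\Lambda_{s}$ with $P^{n}=\bigoplus_{g\in \mathcal{G}^{n}}t(g)\Lambda_{s}$ generated in degree $n$, this resolution is linear, so $\Lambda_{s}$ is Koszul.

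The main technical obstacle I anticipate is the self-injectivity step: one must rule out hidden nonzero elements in degrees $\geq 3$ and, in the socle computation, keep careful track of the sign relation $a_{i}b_{i+1}=-b_{i}a_{i+1}$ so as to confirm that both $a_{i}\rad$ and $b_{i}\rad$ collapse into the one-dimensional space $K\,a_{i}b_{i+1}$. The special biserial and Koszul parts are essentially combinatorial and follow by inspection once this basis/grading bookkeeping is in place.
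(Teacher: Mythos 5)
Your proposal is correct and follows essentially the same route as the paper: self-injectivity by directly computing the indecomposable projectives (the paper's ``presentations of all indecomposable projective and injective modules''), and Koszulity by observing that the resolution of $\Lambda_{s}/\rad\Lambda_{s}$ built from the sets $\mathcal{G}^{n}$ is linear because each $g_{i,j}^{n}$ is homogeneous of length $n$. You merely supply more explicit detail than the paper (the basis $\{e_{i},a_{i},b_{i},a_{i}b_{i+1}\}$, the socle $S_{i+2}$ and Nakayama permutation $i\mapsto i+2$, and the verification of the special biserial conditions, which the paper leaves implicit), all of which checks out.
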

 \begin{proof}
  By calculating directly 
  the presentations of all indecomposable projective 
  and injective $\Lambda_{s}$-modules, 
  we easily see that $\Lambda_{s}$ is self-injective. 
  Moreover, let $d^{0}:P^{0}:=\bigoplus_{x\in \mathcal{G}^{0}}t(x)\Lambda_{s}\rightarrow \Lambda_{s}/\rad \Lambda_{s}$ 
  be the natural map. 
  Then the projective resolution $(P^{\bullet},d^{\bullet})$ 
  given by (a), (b) and (c) is a linear resolution of $\Lambda_{s}/\rad \Lambda_{s}$. 
  Therefore, $\Lambda_{s}$ is a Koszul algebra. 
 \end{proof}
In order to obtain a minimal projective resolution $(Q^{\bullet},\partial^{\bullet})$ 
of $\Lambda_{s}$ as a $\Lambda_{s}^{e}$-module, 
we need the following lemma. 
 \begin{lem}
  For $n\geq 1$, the following equations hold{\rm :} 
   \begin{itemize}
    \item $g_{i,0}^{n} = yg_{i+1,0}^{n-1}$ \quad for $0 \leq i \leq s-1$, 
    \item $g_{i,j}^{n} = yg_{i+1,j}^{n-1} + xg_{i+1,j-1}^{n-1}$ 
          \quad for $0 \leq i \leq s-1$\text{ and } $1\leq j \leq n-1$, 
    \item $g_{i,n}^{n} = xg_{i+1,n-1}^{n-1}$ \quad for $0 \leq i \leq s-1$. 
   \end{itemize}
 \end{lem}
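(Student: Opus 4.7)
The natural approach is induction on $n$. Conceptually, Definition 2.1 builds $g_{i,j}^{n}$ by attaching a letter on the \emph{right}, whereas the lemma asserts the symmetric recursion obtained by attaching a letter on the \emph{left}; both in fact describe $e_{i}$ times the sum of all length-$n$ words in $\{x,y\}$ having exactly $j$ occurrences of $x$, but the task is to verify this algebraically in $K\Gamma_{s}$.

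The base case $n = 1$ is a direct computation. Here the vertex-arrow commutation $e_{i}x = xe_{i+1}$ and $e_{i}y = ye_{i+1}$ in $K\Gamma_{s}$ (which follows from $e_{i}\sum_{k}a_{k} = a_{i} = \sum_{k}a_{k}e_{i+1}$, and similarly for $y$) gives $g_{i,0}^{1} = e_{i}y = ye_{i+1} = yg_{i+1,0}^{0}$ and $g_{i,1}^{1} = e_{i}x = xe_{i+1} = xg_{i+1,0}^{0}$, while the middle identity is vacuous.

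For the inductive step, assume the three identities hold at level $n-1$. The first identity at level $n$ follows from $g_{i,0}^{n} = g_{i,0}^{n-1}y = (yg_{i+1,0}^{n-2})y = yg_{i+1,0}^{n-1}$, using the first IH identity in the middle step and Definition 2.1 at the end; the third identity is symmetric. For the middle identity with $1 \leq j \leq n-1$, expand $g_{i,j}^{n} = g_{i,j-1}^{n-1}x + g_{i,j}^{n-1}y$ by Definition 2.1, apply the IH to both $g_{i,j-1}^{n-1}$ and $g_{i,j}^{n-1}$ (invoking the first IH identity when $j-1 = 0$ and the third when $j = n-1$), and regroup the four resulting terms by extracting $y$ and $x$ on the left:
\[
g_{i,j}^{n} = y\bigl(g_{i+1,j-1}^{n-2}x + g_{i+1,j}^{n-2}y\bigr) + x\bigl(g_{i+1,j-2}^{n-2}x + g_{i+1,j-1}^{n-2}y\bigr).
\]
By Definition 2.1 applied at level $n-1$, the two parenthesized expressions are $g_{i+1,j}^{n-1}$ and $g_{i+1,j-1}^{n-1}$ respectively, completing the step.

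The only genuinely delicate point is the boundary case-analysis at $j = 1$ and $j = n-1$, where one must invoke the first or third IH identity in place of the middle one; the cleanest way to avoid the splitting is to adopt the convention $g_{i,-1}^{m} = 0 = g_{i,m+1}^{m}$, under which Definition 2.1 and the lemma collapse to a single uniform Pascal-type recursion and the three cases reduce to one. I do not anticipate any substantive obstacle beyond associativity and this bookkeeping.
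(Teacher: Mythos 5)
Your proof is correct and follows exactly the route the paper intends: the paper simply remarks that the lemma ``is easily done by induction on $n$,'' and your argument (base case $n=1$ via $e_{i}x=xe_{i+1}$, $e_{i}y=ye_{i+1}$, then expanding by Definition 2.1, applying the inductive hypothesis, and regrouping, with the convention $g_{i,-1}^{m}=0=g_{i,m+1}^{m}$ handling the boundary cases) is precisely that induction carried out in detail.
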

\noindent
The proof is easily done by induction on $n$. 
\subsection{A minimal projective resolution 
           of $\Lambda_{s}$ as a right $\Lambda_{s}^{e}$-module}
In this subsection, 
by using the sets $\mathcal{G}^n$ ($n\geq 0$) of Section 2.1, 
we give a minimal projective resolution $(Q^{\bullet},\partial^{\bullet})$ of $\Lambda_{s}$ 
as a right $\Lambda_{s}^{e}$-module. 

First, we start with the definition of the projective 
$\Lambda_{s}^{e}$-module $Q^{n}$ for $n\geq 0$. 
For simplicity, we denote $\otimes_{K}$ by $\otimes$ and, 
for $n\geq 0$, set the elements 
$b_{i,j}^{n}:=o(g_{i,j}^{n})\otimes t(g_{i,j}^{n})$ 
in $\Lambda_{s}o(g_{i,j}^{n})\otimes t(g_{i,j}^{n})\Lambda_{s}$ 
for $0\leq i \leq s-1$ and $0\leq j \leq n$. 
 \begin{defin}
  We define the projective $\Lambda_{s}^e$-module $Q^{n}$ by 
   \[
    Q^{n}:=\bigoplus_{g \in \mathcal{G}^{n}}\Lambda_{s}o(g)\otimes t(g)\Lambda_{s}=\bigoplus_{i=0}^{s-1} \bigoplus_{j=0}^{n} \Lambda_{s} b_{i,j}^{n} \Lambda_{s} \quad\text{ for }n\geq 0.
   \]
 \end{defin}
Next, we define the map $\partial^{n}$: $Q^{n}\rightarrow Q^{n-1}$ 
as follows: 
 \begin{defin}
  We define $\partial^{0}:Q^{0}\rightarrow \Lambda_{s}$ to be the multiplication map, 
  and, for $n\geq 1$, $\partial^{n}:Q^{n}\rightarrow Q^{n-1}$ 
  to be the $\Lambda_{s}^{e}$-homomorphism determined by 
   \begin{itemize}
    \item $b_{i,0}^{n} \longmapsto b_{i,0}^{n-1}y+(-1)^{n}yb_{i+1,0}^{n-1}$ \quad for\, $0 \leq i \leq s-1$, 
    \item $b_{i,j}^{n} \longmapsto (b_{i,j-1}^{n-1}x+b_{i,j}^{n-1}y)+(-1)^{n}(yb_{i+1,j}^{n-1}+xb_{i+1,j-1}^{n-1})$ 
          \quad for\, $0 \leq i \leq s-1$ {\rm and} $1\leq j \leq n-1$,
    \item $b_{i,n}^{n} \longmapsto b_{i,n-1}^{n-1}x+(-1)^{n}xb_{i+1,n-1}^{n-1}$ \quad for \, $0 \leq i \leq s-1$, 
   \end{itemize}
  where the subscript $i+1$ of $b_{i+1,\bullet}^{\bullet}$ 
 is regarded as modulo $s$. 
 \end{defin}
\noindent
By direct computations, we see that the composite 
$\partial^{n}\partial^{n+1}$ is zero for all $n\geq 0$. 
Therefore, $(Q^{\bullet},\partial^{\bullet})$ is a complex of 
$\Lambda_{s}^{e}$-modules. 

Now, since $\Lambda_{s}$ is a Koszul algebra by Proposition 2.2, 
the following theorem is immediate from \cite{GHMS}. 
 \begin{thm}
  For $s\geq 1$, $(Q^{\bullet},\partial^{\bullet})$ is 
  a minimal projective $\Lambda_{s}^{e}$-resolution of $\Lambda_{s}$. 
 \end{thm}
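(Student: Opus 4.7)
The plan is to reduce the claim to the general theorem of Green, Hartmann, Marcos and Solberg in \cite{GHMS} for Koszul algebras. Since Proposition 2.2 establishes that $\Lambda_{s}$ is Koszul, their construction produces a minimal projective $\Lambda_{s}^{e}$-resolution of $\Lambda_{s}$ from any choice of sets $\mathcal{G}^{n}$ satisfying the conditions (a)--(c) recalled in Section 2. All that is really needed is to identify the explicit complex $(Q^{\bullet},\partial^{\bullet})$ of Definitions 2.4 and 2.5 with the one prescribed by GHMS.

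First I would observe that each $Q^{n}$ is projective as a $\Lambda_{s}^{e}$-module, since every summand $\Lambda_{s}o(g)\otimes t(g)\Lambda_{s}=\Lambda_{s}e_{i}\otimes e_{i+n}\Lambda_{s}$ is an indecomposable projective bimodule. Next I would verify that $\partial^{n}$ coincides with the GHMS differential. Their recipe attaches to each generator $g\in\mathcal{G}^{n}$ both a right decomposition $g=\sum_{h\in\mathcal{G}^{n-1}} h\, r_{h}$, supplied here by Definition 2.1, and a left decomposition $g=\sum_{h\in\mathcal{G}^{n-1}} \ell_{h}\, h$, supplied here by Lemma 2.3. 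The GHMS differential sends the generator of $\Lambda_{s}o(g)\otimes t(g)\Lambda_{s}$ to the signed combination ``right decomposition plus $(-1)^{n}$ times left decomposition,'' which is exactly the formula in Definition 2.5. The identity $\partial^{n}\partial^{n+1}=0$ then follows from a direct computation in which the mixed cross-terms cancel in pairs thanks to the sign $(-1)^{n}$ and the compatibility between the recursions of Definition 2.1 and Lemma 2.3. Minimality is automatic because every coefficient appearing in $\partial^{n}$ is either $x$ or $y$, both of which lie in $\rad\Lambda_{s}$, so the image of $\partial^{n}$ is contained in $\rad(\Lambda_{s}^{e})\cdot Q^{n-1}$.

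The genuine difficulty is exactness of $(Q^{\bullet},\partial^{\bullet})$, and this is precisely what the Koszul hypothesis is designed to handle. GHMS prove that for every Koszul algebra the bimodule complex assembled in this way from the linear resolution of $A/\rad A$ is exact; once one has matched $Q^{n}$ and $\partial^{n}$ with their template, Theorem 2.6 is an immediate instance of their general construction. In other words, the main obstacle, verifying acyclicity, is entirely absorbed into the Koszulity of $\Lambda_{s}$ already established in Proposition 2.2, and the proof reduces to the bookkeeping identifications described above.
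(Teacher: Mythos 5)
Your proposal is correct and follows essentially the same route as the paper: the paper's proof of Theorem 2.6 is exactly the observation that, since $\Lambda_{s}$ is Koszul by Proposition 2.2 and the complex $(Q^{\bullet},\partial^{\bullet})$ is the one constructed from the sets $\mathcal{G}^{n}$ following \cite{GHMS}, the result is immediate from their general theorem. Your extra bookkeeping (matching Definition 2.5 with the GHMS differential via Definition 2.1 and Lemma 2.3, and noting minimality from the radical coefficients) just makes explicit what the paper leaves to the reader.
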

\section{The Hochschild cohomology groups $\HH^{n}(\Lambda_{s})$} 
In this section, we calculate the Hochschild cohomology group 
$\HH^{n}(\Lambda_{s})$ for $n\geq 0$. 
By applying the functor $\Hom_{\Lambda_{s}^{e}}(-,\Lambda_{s})$ 
to the resolution $(Q^{\bullet},\partial^{\bullet})$, we have the complex 
 \[
  0 \longrightarrow \widehat{Q}^{0} \stackrel{\widehat{\partial}^{1}}{\longrightarrow} \widehat{Q}^{1} \stackrel{\widehat{\partial}^{2}}{\longrightarrow} \widehat{Q}^{2} \stackrel{\widehat{\partial}^{3}}{\longrightarrow} \cdots \stackrel{\widehat{\partial}^{n-1}}{\longrightarrow} \widehat{Q}^{n-1} \stackrel{\widehat{\partial}^{n}}{\longrightarrow} \widehat{Q}^{n} \stackrel{\widehat{\partial}^{n+1}}{\longrightarrow} \widehat{Q}^{n+1} \stackrel{\widehat{\partial}^{n+2}}{\longrightarrow} \cdots, 
 \]
where $\widehat{Q}^{n}:=\Hom_{\Lambda_{s}^{e}}(Q^{n},\Lambda_{s})$ 
and $\widehat{\partial}^{n}:=\Hom_{\Lambda_{s}^{e}}(\partial^{n},\Lambda_{s})$. 
We recall that, for $n\geq 0$, the $n$-th Hochschild cohomology group 
$\HH^{n}(\Lambda_{s})$ is defined to be the $K$-space 
$\HH^{n}(\Lambda_{s}):=\Ext_{\Lambda_{s}^{e}}^{n}(\Lambda_{s},\Lambda_{s})=\Ker\widehat{\partial}^{n+1}/\Ima\widehat{\partial}^{n}$. 
\subsection{A basis of $\widehat{Q}^{n}$}
We start with the following easy lemma. 
 \begin{lem}
  \label{lemma1}
   For integers $n\geq 0$, $0\leq i \leq s-1$ and $0\leq j \leq n$, 
   the $K$-space $o(g_{i,j}^{n})\Lambda_{s}t(g_{i,j}^{n})=e_{i}\Lambda_{s}e_{i+n}$ 
   has the following basis{\rm :} 
    $$
     \left\{
      \begin{array}{ll}
       1,x,y,xy &\quad \text{if } s=1, \\ 
       e_{i},e_{i}xy &\quad \text{if } s=2 \text{ and } n\equiv 0 \pmod{2},\\
       e_{i}x,e_{i}y &\quad  \text{if } s=2 \text{ and } n\equiv1 \pmod{2},\\
       e_{i} &\quad \text{if } s\geq3 \text{ and } n\equiv 0 \pmod{s},\\
       e_{i}x,e_{i}y &\quad  \text{if } s\geq 3 \text{ and } n\equiv 1 \pmod{s},\\
       e_{i}xy &\quad  \text{if } s\geq 3 \text{ and } n\equiv 2 \pmod{s}. 
      \end{array}
     \right.
    $$
   Also, if $s\geq 4$ and $n\not\equiv 0,1,2 \pmod{s}$, 
   then $o(g_{i,j}^{n})\Lambda_{s}t(g_{i,j}^{n})=0$. 
 \end{lem}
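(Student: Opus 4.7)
The plan is to reduce the lemma to identifying a $K$-basis of $\Lambda_{s}$ itself and then intersecting with the corner $e_{i}\Lambda_{s}e_{i+n}$. Since $o(g_{i,j}^{n}) = e_{i}$ and $t(g_{i,j}^{n}) = e_{i+n}$ for every $j$ (as observed just after Definition 2.1), the space in the statement coincides with $e_{i}\Lambda_{s}e_{i+n}$, so the parameter $j$ plays no role in the answer.

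First I would establish a $K$-basis of $\Lambda_{s}$. Because $I$ is generated by the homogeneous quadratic elements $x^{2}$, $xy+yx$, $y^{2}$, the algebra $\Lambda_{s}$ inherits a length grading from $K\Gamma_{s}$. The congruence $yx \equiv -xy$ modulo $I$ lets me reorder every monomial, up to sign, so that all $x$-factors precede all $y$-factors; together with $x^{2} \equiv 0 \equiv y^{2}$, this forces every nonzero monomial of $\Lambda_{s}$ to have length at most $2$. Consequently
\[
\bigl\{\,e_{i},\; e_{i}x,\; e_{i}y,\; e_{i}xy \,\bigm|\, 0\le i\le s-1\,\bigr\}
\]
spans $\Lambda_{s}$. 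Linear independence is checked directly inside $K\Gamma_{s}$: in each length-$2$ summand $e_{i}(K\Gamma_{s})_{2}$ the three generators $e_{i}x^{2}$, $e_{i}(xy+yx)$, $e_{i}y^{2}$ of $I$ are linearly independent among the four paths $a_{i}a_{i+1}$, $a_{i}b_{i+1}$, $b_{i}a_{i+1}$, $b_{i}b_{i+1}$, so the quotient is one-dimensional in length $2$ at each origin. Combined with the standard dimensions in lengths $0$ and $1$, this yields $\dim_{K}\Lambda_{s} = 4s$ and confirms the displayed set is a basis.

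Finally I would extract $e_{i}\Lambda_{s}e_{i+n}$ from this basis by keeping only the elements whose terminus equals $e_{i+n}$. The termini of $e_{i}$, $e_{i}x$, $e_{i}y$, $e_{i}xy$ are $e_{i}$, $e_{i+1}$, $e_{i+1}$, $e_{i+2}$ respectively (subscripts modulo $s$), so the case analysis is purely arithmetic in $\mathbb{Z}/s\mathbb{Z}$: for $s\ge 3$ the residues $0,1,2$ are distinct, producing the three nonzero cases of the statement and the vanishing whenever $n\not\equiv 0,1,2\pmod{s}$; for $s = 2$ the residues $0$ and $2$ coincide, so both $e_{i}$ and $e_{i}xy$ appear whenever $n$ is even; and for $s = 1$ there is only one vertex, so all four elements live in $e_{0}\Lambda_{1}e_{0}$. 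The only substantive (and mild) obstacle is the length-$2$ independence check, which is immediate from the explicit supports just listed.
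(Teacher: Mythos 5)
Your proposal is correct: the paper states this lemma without proof (calling it ``easy''), and your argument --- using $yx\equiv -xy$, $x^{2}\equiv y^{2}\equiv 0$ to show every monomial of length $\geq 3$ vanishes, checking that the degree-two part of $I$ at each vertex is $3$-dimensional so that $\{e_{i},e_{i}x,e_{i}y,e_{i}xy\}$ is a $K$-basis of $\Lambda_{s}$, and then sorting these basis elements by terminus modulo $s$ --- is exactly the routine verification the paper intends. No gaps; the case analysis in $\mathbb{Z}/s\mathbb{Z}$ matches the statement for $s=1$, $s=2$, and $s\geq 3$.
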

Let $n\geq 0$ be an integer. 
For $0\leq i\leq s-1$ and $0\leq j\leq n$, 
we define right $\Lambda_{s}^{e}$-module homomorphisms 
$\alpha_{i,j}^{n}$, $\beta_{i,j}^{n}$, $\gamma_{i,j}^{n}$, $\delta_{i,j}^{n}$
by the following equations: 
For $0\leq k \leq s-1$ and $0\leq l \leq n$, 
\begin{enumerate}[(i)]
 \item If $s=1$, we define $\alpha_{0,j}^{n}$, $\beta_{0,j}^{n}$, 
       $\gamma_{0,j}^{n}$, $\delta_{0,j}^{n}$: $Q^{n}\rightarrow \Lambda_{1}$ by 
        $$
         \begin{array}{llll}
          \alpha_{0,j}^{n}(b_{0,l}^{n})=
          \left\{
           \begin{array}{ll}
            e_{0}& \text{if }l=j,\\
            0    & \text{otherwise}, 
           \end{array}
          \right. 
          \beta_{0,j}^{n}(b_{0,l}^{n})=
          \left\{
           \begin{array}{ll}
            e_{0}x& \text{if }l=j,\\
            0     & \text{otherwise}, 
           \end{array}
          \right. \\[5mm]
          \gamma_{0,j}^{n}(b_{0,l}^{n})=
          \left\{
           \begin{array}{ll}
            e_{0}y& \text{if }l=j,\\
            0     & \text{otherwise}, 
           \end{array}
          \right. 
          \delta_{0,j}^{n}(b_{0,l}^{n})=
          \left\{
           \begin{array}{ll}
            e_{0}xy& \text{if }l=j,\\
            0      & \text{otherwise}. 
           \end{array}
          \right. 
         \end{array}
        $$\\
 \item If $s=2$, then 
       \begin{itemize}
        \item If $n\equiv 0$ $\pmod{2}$, 
             we define $\alpha_{i,j}^{n}$, 
             $\delta_{i,j}^{n}$: $Q^{n}\rightarrow \Lambda_{2}$ by 
             $$
              \begin{array}{llll}
              \alpha_{i,j}^{n}(b_{k,l}^{n})=
              \left\{
               \begin{array}{ll}
                e_{i}& \text{if }(k,l)=(i,j),\\
                0    & \text{otherwise}, 
               \end{array}
              \right. 
              \delta_{i,j}^{n}(b_{k,l}^{n})=
              \left\{
              \begin{array}{ll}
              e_{i}xy& \text{if }(k,l)=(i,j),\\
              0      & \text{otherwise}. 
              \end{array}
              \right. 
             \end{array}
            $$
      \item If $n\equiv 1$ $\pmod{2}$, 
            we define $\beta_{i,j}^{n}$, 
            $\gamma_{i,j}^{n}$: $Q^{n}\rightarrow \Lambda_{2}$ by 
             $$
              \begin{array}{llll}
              \beta_{i,j}^{n}(b_{k,l}^{n})=
              \left\{
              \begin{array}{ll}
               e_{i}x& \text{if }(k,l)=(i,j),\\
               0     & \text{otherwise}, 
               \end{array}
               \right. 
        	\gamma_{i,j}^{n}(b_{k,l}^{n})=
               \left\{
               \begin{array}{ll}
                e_{i}y& \text{if }(k,l)=(i,j),\\
                0     & \text{otherwise}. 
               \end{array}
              \right. 
              \end{array}
              $$
           \end{itemize}
 \item If $s\geq3$, then 
       \begin{itemize}
       \item If $n\equiv 0$ $\pmod{s}$, 
       we define $\alpha_{i,j}^{n}$: $Q^{n}\rightarrow \Lambda_{s}$ by 
        $$
         \begin{array}{llll}
          \alpha_{i,j}^{n}(b_{k,l}^{n})=
          \left\{
           \begin{array}{ll}
            e_{i}& \text{if }(k,l)=(i,j),\\
            0    & \text{otherwise}. 
           \end{array}
          \right. 
         \end{array}
        $$
       \item If $n\equiv 1$ $\pmod{s}$, 
       we define $\beta_{i,j}^{n}$, 
       $\gamma_{i,j}^{n}$: $Q^{n}\rightarrow \Lambda_{s}$ by 
        $$
         \begin{array}{llll}
          \beta_{i,j}^{n}(b_{k,l}^{n})=
          \left\{
           \begin{array}{ll}
            e_{i}x& \text{if }(k,l)=(i,j),\\
            0     & \text{otherwise}, 
           \end{array}
          \right. 
          \gamma_{i,j}^{n}(b_{k,l}^{n})=
          \left\{
           \begin{array}{ll}
            e_{i}y& \text{if }(k,l)=(i,j),\\
            0     & \text{otherwise}. 
           \end{array}
          \right. 
         \end{array}
        $$
       \item If $n\equiv 2$ $\pmod{s}$, 
       we define $\delta_{i,j}^{n}$: $Q^{n}\rightarrow \Lambda_{s}$ by
       $$
         \begin{array}{llll}
          \delta_{i,j}^{n}(b_{k,l}^{n})=
          \left\{
          \begin{array}{ll}
           e_{i}xy& \text{if }(k,l)=(i,j),\\
           0      & \text{otherwise}. 
          \end{array}
          \right. 
         \end{array}
        $$
        \end{itemize}
\end{enumerate}
Throughout this paper, 
we regard the subscripts $i$ 
of $\alpha_{i,\bullet}^{\bullet}$, $\beta_{i,\bullet}^{\bullet}$, $\gamma_{i,\bullet}^{\bullet}$ 
and $\delta_{i,\bullet}^{\bullet}$ as modulo $s$. 

For $n\geq 0$, 
we have an isomorphism of $K$-spaces 
$F_{n}$: $\bigoplus_{g\in \mathcal{G}^{n}}o(g)\Lambda_{s}t(g)\stackrel{\sim}{\rightarrow} \Hom_{\Lambda_{s}^{e}}(Q^{n},\Lambda_{s})$ 
given by $F_{n}(\sum_{g\in \mathcal{G}^{n}}z_{g})(b_{i,j}^{n})=z_{g_{i,j}^{n}}$, 
where $z_{g}\in o(g)\Lambda_{s}t(g)$ for $g\in \mathcal{G}^{n}$, 
$0\leq i\leq s-1$ and $0\leq j\leq n$ (cf. \cite{F}). 

Therefore, by the above isomorphism and Lemma~\ref{lemma1}, 
we have the following lemma: 
 \begin{lem}
  Let $n\geq 0$ be an integer. 
  Then the $K$-space $\widehat{Q}^{n}=\Hom_{\Lambda_{s}^{e}}(Q^{n},\Lambda_{s})$ 
  has the following basis{\rm :} 
   $$
    \left\{
     \begin{array}{ll}
      \alpha_{0,l}^{n},\beta_{0,l}^{n},\gamma_{0,l}^{n},\delta_{0,l}^{n}\ (0\leq l\leq n)    &\quad \text{if } s=1, \\
      \alpha_{k,l}^{n},\delta_{k,l}^{n} \ (k=0,1;\ 0\leq l \leq n)                             &\quad \text{if } s=2 \text{ and } n\equiv 0 \pmod{2},\\
      \beta_{k,l}^{n},\gamma_{k,l}^{n}\ (k=0,1;\ 0 \leq l \leq n)                               &\quad \text{if } s=2 \text{ and } n\equiv 1 \pmod{2},\\
      \alpha_{k,l}^{n} \ (0\leq k \leq s-1;\ 0\leq l \leq n)                                   &\quad \text{if } s\geq3 \text{ and } n\equiv 0 \pmod{s},\\
      \beta_{k,l}^{n},\gamma_{k,l}^{n} \ (0\leq k \leq s-1;\ 0\leq l \leq n)                   &\quad \text{if } s\geq3 \text{ and } n\equiv 1 \pmod{s},\\
      \delta_{k,l}^{n} \ (0\leq k \leq s-1;\ 0\leq l \leq n)                                   &\quad \text{if } s\geq3 \text{ and } n\equiv 2 \pmod{s}.
     \end{array}
    \right.
   $$
  Also, if $s\geq 4$ and $n \not\equiv 0,1,2 \pmod{s}$, then $\widehat{Q}^{n}=0$. 
 \end{lem}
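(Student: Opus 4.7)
The plan is to reduce the lemma to Lemma~\ref{lemma1} via the $K$-linear isomorphism
$F_{n}\colon \bigoplus_{g\in \mathcal{G}^{n}} o(g)\Lambda_{s}t(g) \xrightarrow{\sim} \Hom_{\Lambda_{s}^{e}}(Q^{n},\Lambda_{s})$
introduced immediately before the statement. Since $Q^{n}$ decomposes as the direct sum of the projective bimodules $\Lambda_{s}o(g)\otimes t(g)\Lambda_{s}$ for $g\in\mathcal{G}^{n}$, and since for each such summand the standard adjunction identifies $\Hom_{\Lambda_{s}^{e}}(\Lambda_{s}o(g)\otimes t(g)\Lambda_{s},\Lambda_{s})$ with $o(g)\Lambda_{s}t(g)$ via evaluation at $o(g)\otimes t(g) = b_{i,j}^{n}$, the map $F_{n}$ is an isomorphism of $K$-vector spaces. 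Consequently a $K$-basis of $\widehat{Q}^{n}$ can be read off from a $K$-basis of each direct summand $o(g_{i,j}^{n})\Lambda_{s}t(g_{i,j}^{n}) = e_{i}\Lambda_{s}e_{i+n}$.

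Next I would invoke Lemma~\ref{lemma1} to list a basis of $e_{i}\Lambda_{s}e_{i+n}$ in each of the six cases (according to $s$ and $n\bmod s$), together with the vanishing statement for $s\geq 4$ and $n\not\equiv 0,1,2\pmod{s}$. For each case I would then translate these basis elements through $F_{n}$: the preimage of $e_{i}$ (resp.\ $e_{i}x$, $e_{i}y$, $e_{i}xy$) sitting in the $(i,j)$-th summand corresponds precisely to the bimodule homomorphism $\alpha_{i,j}^{n}$ (resp.\ $\beta_{i,j}^{n}$, $\gamma_{i,j}^{n}$, $\delta_{i,j}^{n}$) as defined in the paragraph preceding the lemma, since those maps were defined exactly by sending $b_{i,j}^{n}\mapsto e_{i},\, e_{i}x,\, e_{i}y,\, e_{i}xy$ and vanishing on all other basis generators $b_{k,l}^{n}$.

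The remaining step is bookkeeping: checking that the indicated families of $\alpha,\beta,\gamma,\delta$ really exhaust the generators coming from every summand indexed by $\mathcal{G}^{n}=\{g_{i,j}^{n}\mid 0\leq i\leq s-1,\ 0\leq j\leq n\}$, and that when $s\geq 4$ and $n\not\equiv 0,1,2\pmod{s}$ every summand is zero so $\widehat{Q}^{n}=0$. Since $F_{n}$ is an isomorphism and Lemma~\ref{lemma1} produces a basis on each summand, the union over $(i,j)$ of the transported basis elements is automatically a basis of $\widehat{Q}^{n}$.

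The only potential obstacle is a notational one, namely keeping the case analysis for $s=1$, $s=2$, and $s\geq 3$ consistent with the definitions of $\alpha_{i,j}^{n},\beta_{i,j}^{n},\gamma_{i,j}^{n},\delta_{i,j}^{n}$ (which themselves are defined piecewise depending on $s$ and $n\bmod s$); beyond this, the argument is a direct transport of basis along the evaluation isomorphism and requires no further computation.
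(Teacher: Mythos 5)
Your argument is correct and is essentially the paper's own: the paper likewise deduces this lemma immediately from Lemma~\ref{lemma1} by transporting bases of the summands $o(g_{i,j}^{n})\Lambda_{s}t(g_{i,j}^{n})=e_{i}\Lambda_{s}e_{i+n}$ through the isomorphism $F_{n}$, identifying the preimages of $e_{i}$, $e_{i}x$, $e_{i}y$, $e_{i}xy$ with $\alpha_{i,j}^{n}$, $\beta_{i,j}^{n}$, $\gamma_{i,j}^{n}$, $\delta_{i,j}^{n}$. Your additional remark that $F_{n}$ is the standard evaluation adjunction on each projective summand just makes explicit what the paper leaves implicit.
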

\subsection{Maps $\widehat{\partial}^{n+1}$}
In the rest of this paper, 
we assume $s\geq 3$. 
By direct computations, 
we have the images of the basis elements in Lemma 3.2 
under the map 
$\widehat{\partial}^{n+1}=\Hom_{\Lambda_{s}^{e}}(\partial^{n+1},\Lambda_{s})$ for $n\geq 0$: 
 \begin{lem}
  For $0\leq i \leq s-1$ and $0 \leq j \leq n$, 
  we have the following equations{\rm :}
   \begin{enumerate}[\rm (i)]
    \item If $n\equiv 0 \pmod{s}$, then
                \begin{align*}
                 \widehat{\partial}^{n+1}(\alpha_{i,j}^{n})&=\alpha_{i,j}^{n}\partial^{n+1}\\
                                                             &=\beta_{i,j+1}^{n+1}+\gamma_{i,j}^{n+1}+(-1)^{n+1}(\beta_{i-1,j+1}^{n+1}+\gamma_{i-1,j}^{n+1}). 
                \end{align*}
    \item If $n\equiv 1 \pmod{s}$, then
                $$
                 \left\{
                  \begin{array}{lll}
                   \widehat{\partial}^{n+1}(\beta_{i,j}^{n})&=\beta_{i,j}^{n}\partial^{n+1}&=\delta_{i,j}^{n+1}+(-1)^n\delta_{i-1,j}^{n+1}, \\
                   \widehat{\partial}^{n+1}(\gamma_{i,j}^{n})&=\gamma_{i,j}^{n}\partial^{n+1}&=-\delta_{i,j+1}^{n+1}+(-1)^{n+1}\delta_{i-1,j+1}^{n+1}. 
                  \end{array}
                 \right.
                $$
    \item If $n\equiv 2 \pmod{s}$, then 
                 $$
                 \widehat{\partial}^{n+1}(\delta_{i,j}^{n})=\delta_{i,j}^{n}\partial^{n+1}=0. 
                 $$
   \end{enumerate}
  Moreover, if $n\not\equiv 0,1,2 \pmod{s}$, 
  then we have $\widehat{\partial}^{n+1}=0$. 
 \end{lem}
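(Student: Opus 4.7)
The proof is a direct computation: we unfold the definitions, apply $\partial^{n+1}$, and read off the result in the basis of $\widehat{Q}^{n+1}$ given by Lemma 3.2. Concretely, given $f\in\{\alpha_{i,j}^{n},\beta_{i,j}^{n},\gamma_{i,j}^{n},\delta_{i,j}^{n}\}$, the plan is to evaluate $\widehat{\partial}^{n+1}(f)=f\circ\partial^{n+1}$ on each generator $b_{i',j'}^{n+1}$ of $Q^{n+1}$ and compare with the claimed right-hand side evaluated on the same generator.

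First I would split $\partial^{n+1}(b_{i',j'}^{n+1})$ into the three cases of Definition 2.5 (for $j'=0$, $1\le j'\le n$, and $j'=n+1$). Then, using that $f$ is a $\Lambda_{s}^{e}$-module map, I pull the left and right $\Lambda_{s}$-scalars out of each tensor term. Each of $\alpha_{i,j}^{n},\beta_{i,j}^{n},\gamma_{i,j}^{n},\delta_{i,j}^{n}$ is nonzero only on the single generator $b_{i,j}^{n}$, so each of the (at most four) summands of $\partial^{n+1}(b_{i',j'}^{n+1})$ contributes only when the index pair matches either $(i',\cdot)=(i,\cdot)$ or, after the shift coming from left multiplication, $(i'+1,\cdot)=(i,\cdot)$. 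This is what produces the two $i$-shifted families $\{\beta_{i,\cdot}^{n+1},\gamma_{i,\cdot}^{n+1}\}$ versus $\{\beta_{i-1,\cdot}^{n+1},\gamma_{i-1,\cdot}^{n+1}\}$ (respectively $\delta_{i,\cdot}^{n+1}$ versus $\delta_{i-1,\cdot}^{n+1}$) in the answer, and accounts for the sign $(-1)^{n+1}$ attached to the left-multiplication terms.

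Next I would use the defining relations of $\Lambda_{s}$ to simplify each surviving term: $x^{2}=0$, $y^{2}=0$, and $yx=-xy$. For $\alpha_{i,j}^{n}$ nothing collapses, and the four surviving contributions assemble into $\beta_{i,j+1}^{n+1}+\gamma_{i,j}^{n+1}+(-1)^{n+1}(\beta_{i-1,j+1}^{n+1}+\gamma_{i-1,j}^{n+1})$. For $\beta_{i,j}^{n}$, the terms containing a factor $x^{2}$ vanish, and the surviving terms with a factor $yx$ pick up an extra minus sign, turning $(-1)^{n+1}$ into $(-1)^{n}$; this yields $\delta_{i,j}^{n+1}+(-1)^{n}\delta_{i-1,j}^{n+1}$. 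The computation for $\gamma_{i,j}^{n}$ is symmetric. For $\delta_{i,j}^{n}$ every contribution lies in $e_{i}xy\cdot\Lambda_{s}\cdot\text{(arrow)}$ or $\text{(arrow)}\cdot\Lambda_{s}\cdot e_{\cdot}xy$, and all such products vanish because $xy$ generates the socle; hence $\widehat{\partial}^{n+1}(\delta_{i,j}^{n})=0$.

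Finally, the "moreover" clause is immediate: by Lemma 3.2, for $s\ge 4$ and $n\not\equiv 0,1,2\pmod{s}$ we have $\widehat{Q}^{n}=0$, whereas for $s=3$ every $n$ already lies in one of the three residue classes modulo $3$. The only real obstacle in the whole argument is the bookkeeping: carefully tracking (i) which of the four terms of $\partial^{n+1}(b_{i',j'}^{n+1})$ survives under each of $\alpha,\beta,\gamma,\delta$, (ii) the index shift $i\mapsto i-1$ produced by pulling $y$ or $x$ from the left past the idempotent $e_{i}$, and (iii) the sign flip from $yx=-xy$. Once these are organized into a small table for each of the three residue classes, the claimed formulas drop out immediately.
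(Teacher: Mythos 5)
Your proposal is correct and matches the paper's approach: the paper offers no written argument beyond ``by direct computations,'' and your plan---evaluate $f\circ\partial^{n+1}$ on each generator $b_{k,l}^{n+1}$ using Definition 2.5, track the index shift $ye_{i}=e_{i-1}y$, $xe_{i}=e_{i-1}x$ with the sign $(-1)^{n+1}$, simplify via $x^{2}=y^{2}=0$, $yx=-xy$, and invoke Lemma 3.2 for the vanishing when $n\not\equiv 0,1,2\pmod{s}$---is exactly that computation, and it yields the stated formulas.
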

\subsection{A basis of $\Ima \widehat{\partial}^{n+1}$}
Now, by Lemma 3.3, we have the following lemma. 
 \begin{lem}
  Let $n\geq 0$ be any integer. 
  If we write $n=ms+r$ for integers $m$ and $0\leq r \leq s-1$, 
  then the following elements give a $K$-basis of the subspace 
  $\Ima \widehat{\partial}^{n+1}$ of $\widehat{Q}^{n+1}${\rm:} 
   \begin{enumerate}[\rm(a)]
    \item $\beta_{i,j+1}^{ms+1}+\gamma_{i,j}^{ms+1}-\beta_{i-1,j+1}^{ms+1}-\gamma_{i-1,j}^{ms+1}$ $(0\leq i\leq s-1$, $0\leq j \leq ms)$ 
          is a $K$-basis of $\Ima \widehat{\partial}^{ms+1}$ for $m$ odd, $s$ odd, and {\rm char}\,$K\neq 2$. 
    \item $\beta_{i,j+1}^{ms+1}+\gamma_{i,j}^{ms+1}-\beta_{i-1,j+1}^{ms+1}-\gamma_{i-1,j}^{ms+1}$ $(0\leq i\leq s-2$, $0\leq j \leq ms)$ 
          is a $K$-basis of $\Ima \widehat{\partial}^{ms+1}$ for $m$ even, $s$ even, or {\rm char}\,$K= 2$. 
    \item $\delta_{i,j}^{ms+2}+\delta_{i-1,j}^{ms+2}$ $(0\leq i \leq s-1$, $0\leq j \leq ms+2)$ 
          is a $K$-basis of $\Ima \widehat{\partial}^{ms+2}$ for $m$ odd, $s$ odd, and {\rm char}\,$K\neq 2$. 
    \item $\delta_{i,j}^{ms+2}+\delta_{i-1,j}^{ms+2}$ $(0\leq i \leq s-2$, $0\leq j \leq ms+2)$ 
          is a $K$-basis of $\Ima \widehat{\partial}^{ms+2}$ for  $m$ even, $s$ even, or {\rm char}\,$K= 2$. 
    \item $\Ima \widehat{\partial}^{ms+r+1}=0$ for $r\neq 0,1$. 
   \end{enumerate} 
 \end{lem}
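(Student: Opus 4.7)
The plan is to use the explicit image formulas of Lemma 3.3 and to extract a $K$-basis by a linear independence analysis that, for each fixed second index, reduces to computing the rank of an $s\times s$ circulant matrix of the form $I+cP$, where $P$ is the cyclic shift on $\mathbb{Z}/s\mathbb{Z}$ and $c=\pm 1$ is a sign dictated by the parity of $n$. Part (e) is immediate from Lemma 3.3, which gives $\widehat{\partial}^{n+1}=0$ whenever $n\not\equiv 0,1\pmod{s}$, so $\Ima \widehat{\partial}^{ms+r+1}=0$ for $r\geq 2$.

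For parts (a) and (b), set $n=ms$. By Lemma 3.3(i), $\Ima \widehat{\partial}^{n+1}$ is spanned by
\[
 w_{i,j}:=(\beta_{i,j+1}^{n+1}+\gamma_{i,j}^{n+1})+(-1)^{n+1}(\beta_{i-1,j+1}^{n+1}+\gamma_{i-1,j}^{n+1}),
\]
for $0\leq i\leq s-1$ and $0\leq j\leq n$. For distinct values of $j$, the supports of $w_{*,j}$ in the basis of $\widehat{Q}^{n+1}$ given by Lemma 3.2 are disjoint (the $\gamma$-components use second index $j$ while the $\beta$-components use $j+1$), so linear independence can be tested one $j$ at a time. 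Setting $u_i:=\beta_{i,j+1}^{n+1}+\gamma_{i,j}^{n+1}$, the $u_i$ are linearly independent and $w_{i,j}=u_i+(-1)^{n+1}u_{i-1}$, so the question reduces to the rank of $M:=I+(-1)^{n+1}P$ on $K^s$. The eigenvalues of $M$ over $\overline{K}$ are $1+(-1)^{n+1}\zeta$ as $\zeta$ runs over the $s$-th roots of unity, so $M$ is singular iff $(-1)^n$ is an $s$-th root of unity in $K$. In case (a), $n=ms$ is odd and $-1$ is not an $s$-th root of unity since $s$ is odd and char $K\neq 2$, so $M$ is invertible, all $s(n+1)$ elements $w_{i,j}$ are independent, and they form the claimed basis. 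In case (b), either $n$ is even (so $\zeta=1$ causes singularity) or char $K=2$ (so $I+P=I-P$ is automatically singular with the all-ones vector in its kernel); in either subcase $M$ has rank $s-1$, the unique relation is $\sum_{i=0}^{s-1}w_{i,j}=0$ for each $j$, and discarding $i=s-1$ leaves a basis of size $(s-1)(n+1)$.

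Parts (c) and (d) proceed identically with $n=ms+1$. By Lemma 3.3(ii), the $\beta$-family contributes $\delta_{i,j}^{n+1}+(-1)^n\delta_{i-1,j}^{n+1}$ for $0\leq j\leq n$, while the $\gamma$-family contributes $-\bigl(\delta_{i,j+1}^{n+1}+(-1)^n\delta_{i-1,j+1}^{n+1}\bigr)$; combining them produces, for every $0\leq j'\leq ms+2$ and every $0\leq i\leq s-1$, an image element of the same shape $v_i+(-1)^n v_{i-1}$ with $v_i:=\delta_{i,j'}^{n+1}$. Applying the circulant rank analysis of the previous paragraph for each $j'$ separately recovers case (c) (when $M$ is invertible, giving a basis of size $s(ms+3)$) and case (d) (when it is not, giving $(s-1)(ms+3)$). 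The main technical obstacle is the sign bookkeeping across the parities of $m$, $s$, and the characteristic; conceptually, however, everything collapses to the invertibility of $I\pm P$ on $K^s$, which is governed by whether $\pm 1$ is an $s$-th root of unity in $K$.
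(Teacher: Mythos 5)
Your proposal is correct in substance and is essentially the argument the paper intends: the paper offers no written proof beyond ``by Lemma 3.3,'' i.e.\ exactly the direct computation you carry out, and your circulant formulation (rank of $I+(-1)^{n+1}P$ per fixed second index, with the disjoint-support observation across different $j$, the explicit all-ones relation in the singular cases, and the root-of-unity criterion) is a clean way to organize that computation; the resulting dimensions agree with Corollary 3.5.

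One point you gloss over, and should make explicit: your own sign bookkeeping shows that the elements you exhibit do \emph{not} literally coincide with the elements printed in parts (a) and (d) of the statement. In case (a), $n=ms$ is odd, so $(-1)^{n+1}=+1$ and the image is spanned by $u_i+u_{i-1}$ with $u_i=\beta_{i,j+1}^{ms+1}+\gamma_{i,j}^{ms+1}$, whereas the printed basis carries a minus sign; those printed elements satisfy $\sum_{i=0}^{s-1}(u_i-u_{i-1})=0$ and so cannot be $s(ms+1)$ independent elements. Dually, in case (d) with ${\rm char}\,K\neq 2$ one has $(-1)^{ms+1}=-1$, so the image is spanned by $\delta_{i,j}^{ms+2}-\delta_{i-1,j}^{ms+2}$ (the sum-zero subspace in each $j$), while the printed plus-signed elements are not even contained in it. So the statement as printed has sign slips in (a) and (d), and what your argument proves is the corrected version (signs $(-1)^{ms+1}$ in (a)/(b) and $(-1)^{ms+1}$ again in (c)/(d), i.e.\ $+$ exactly when $m$ and $s$ are both odd); asserting that your $w_{i,j}$ ``form the claimed basis'' without flagging this leaves a mismatch between what you prove and what is claimed, even though the mathematics you do is right and consistent with the paper's dimension formulas.
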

\noindent
As an immediate consequence of the above lemma, 
we get the dimension of $\Ima\widehat{\partial}^{n+1}$ for $n\geq 0$: 
 \begin{cor}
  Let $n=ms+r$ for integers $m\geq $ and $0\leq r \leq s-1$. 
  Then the dimension of $\Ima\widehat{\partial}^{n+1}$ is as follows{\rm :} 
  \begin{align*}
   &\dim_{K}\Ima\widehat{\partial}^{ms+r+1}\\
   &=\left\{
     \begin{array}{ll}
      s(ms+1)             &\quad \text{if}\ s\ \text{odd},\ m\ \text{odd},\ {\rm char}\, K\neq2\ \text{and}\ r=0, \\
      (s-1)(ms+1)         &\quad \text{if}\ s\ \text{even}\ \text{and}\ r=0,\ \text{if}\ m\ \text{even\ and}\ r=0,\text{or}\\
                          &\quad \quad \quad \text{if}\ {\rm char}\,K=2\ \text{and}\ r=0, \\
      s(ms+3)             &\quad \text{if}\ s\ \text{odd},\ m\ \text{odd},\ {\rm char}\,K\neq2\ \text{and}\ r=1, \\
      (s-1)(ms+3)         &\quad \text{if}\ s\ \text{even\ and\ } r=1,\ \text{if}\ m\ \text{even\ and}\ r=1, \text{or}\\
                          &\quad \quad \quad \text{if}\ {\rm char\,}K=2\ \text{and}\ r=1, \\
      0                   &\quad otherwise. 
     \end{array}
    \right.
   \end{align*}
 \end{cor}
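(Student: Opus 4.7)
The plan is to observe that Corollary 3.5 is an immediate counting consequence of the explicit $K$-bases produced in Lemma 3.4, so almost no new work is required. My job is just to read off the cardinality of each basis, organised according to the five cases of that lemma.

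First I would fix $n \geq 0$ and write $n = ms + r$ with $0 \leq r \leq s-1$, splitting into the cases $r=0$, $r=1$ and $r \geq 2$. When $r \geq 2$, case (e) of Lemma 3.4 says $\Ima \widehat{\partial}^{ms+r+1} = 0$, which gives the last line of the corollary. So the real content is in $r = 0$ and $r = 1$.

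Next, for $r = 0$ I would apply case (a) or (b) of Lemma 3.4 according to the parity of $m$ and $s$ and the characteristic of $K$. In case (a), the basis consists of the elements $\beta_{i,j+1}^{ms+1} + \gamma_{i,j}^{ms+1} - \beta_{i-1,j+1}^{ms+1} - \gamma_{i-1,j}^{ms+1}$ with $i$ running over $s$ values $(0,\dots,s-1)$ and $j$ running over $ms+1$ values $(0,\dots,ms)$, giving $s(ms+1)$. In case (b) the index $i$ is restricted to $0,\dots,s-2$, yielding $(s-1)(ms+1)$. The case $r = 1$ is handled identically from (c) and (d): the basis vectors $\delta_{i,j}^{ms+2} + \delta_{i-1,j}^{ms+2}$ are indexed by $s$ (resp.\ $s-1$) values of $i$ and $ms+3$ values of $j$ (since $0 \leq j \leq ms+2$), producing $s(ms+3)$ and $(s-1)(ms+3)$ respectively.

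There is no real obstacle here; the only thing one has to be careful about is matching the three conditions \emph{``$m$ odd, $s$ odd, \textup{char}\,$K\neq 2$''} versus \emph{``$m$ even, or $s$ even, or \textup{char}\,$K = 2$''} against the cases of Lemma 3.4 so that each $(m,s,\mathrm{char}\,K,r)$ is counted exactly once, and to keep track of the off-by-one in the $j$-range (namely $j \leq ms$ when $r=0$ and $j \leq ms+2$ when $r=1$). Once the bookkeeping is in place, the formula for $\dim_K \Ima \widehat{\partial}^{ms+r+1}$ drops out by inspection.
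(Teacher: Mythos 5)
Your proposal is correct and is exactly the paper's argument: the corollary is stated there as an immediate consequence of Lemma 3.4, obtained by counting the listed basis elements ($s$ or $s-1$ choices of $i$ times $ms+1$ or $ms+3$ choices of $j$, and zero in the remaining cases). Nothing further is needed.
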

\subsection{A basis of $\Ker \widehat{\partial}^{n+1}$}
Now, by using Lemma 3.4, 
we have the following lemma. 
The proof follows from easy computations. 
 \begin{lem}
  Let $n=ms+r$ for integers $m\geq 0$ and $0\leq r \leq s-1$. 
  The following elements give a $K$-basis 
  of the subspace $\Ker \widehat{\partial}^{n+1}$ of 
  $\widehat{Q}^{n}${\rm:} 
   \begin{enumerate}[\rm(a)]
    \item $\Ker\widehat{\partial}^{ms+1}=0$ for $m$ odd, $s$ odd, and {\rm char}\,$K\neq 2$. 
    \item $\sum_{i=0}^{s-1} \alpha_{i,j}^{ms}$ $(0\leq j\leq ms)$ 
          is a $K$-basis of $\Ker\widehat{\partial}^{ms+1}$ for $m$ even, $s$ even, or {\rm char}\,$K= 2$. 
    \item $\beta_{i,j+1}^{ms+1}+\gamma_{i,j}^{ms+1}$ $(0\leq i \leq s-1$, $0\leq j \leq ms)$ 
          is a $K$-basis of $\Ker\widehat{\partial}^{ms+2}$ for $m$ odd, $s$ odd, and {\rm char}\,$K\neq 2$. 
    \item $\sum_{i=0}^{s-1}\beta_{i,j}^{ms+1}$ $(0\leq j \leq ms+1)$, 
          $\beta_{i,j+1}^{ms+1}+\gamma_{i,j}^{ms+1}$ $0\leq i \leq s-2$, $0\leq j \leq ms)$, 
          $\sum_{i=0}^{s-1}\gamma_{i,j}^{ms+1}$ $(0\leq j \leq ms+1)$ 
          is a $K$-basis of $\Ker\widehat{\partial}^{ms+2}$ for $m$ even, $s$ even, or {\rm char}\,$K= 2$. 
    \item $\Ker \widehat{\partial}^{ms+3}=\widehat{Q}^{ms+2}$. 
    \item $\Ker \widehat{\partial}^{ms+r+1}=0$ for $r\neq 0,1,2$. 
   \end{enumerate}
 \end{lem}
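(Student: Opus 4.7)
The plan is to split on the residue $r := n - ms$ with $0 \le r \le s-1$, combining the bases of $\widehat{Q}^\bullet$ from Lemma 3.2 with the explicit formulas for $\widehat{\partial}^{\bullet+1}$ from Lemma 3.3. Cases (e) and (f) are immediate: Lemma 3.3(iii) gives $\widehat{\partial}^{ms+3}=0$ on all generators of $\widehat{Q}^{ms+2}$; and for $r \notin \{0,1,2\}$ (which forces $s \ge 4$), Lemma 3.2 yields $\widehat{Q}^n = 0$, so its kernel is trivial.

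For $r = 0$, set $\epsilon := (-1)^{ms+1}$ and $u_{i,j} := \beta_{i,j+1}^{ms+1} + \gamma_{i,j}^{ms+1}$, so that Lemma 3.3(i) reads $\widehat{\partial}^{ms+1}(\alpha_{i,j}^{ms}) = u_{i,j} + \epsilon\, u_{i-1,j}$. Because the $u_{i,j}$ are linearly independent in $\widehat{Q}^{ms+1}$, the kernel equation on $\sum c_{i,j}\, \alpha_{i,j}^{ms}$ decouples across $j$ into an $s\times s$ circulant system. A one-line telescoping check shows this circulant is invertible exactly when $\epsilon = +1$, $s$ is odd, and ${\rm char}\, K \neq 2$ — precisely case (a), yielding $\Ker = 0$ — while in every other parity and characteristic combination it has one-dimensional kernel spanned by the all-ones vector, producing the basis $\sum_i \alpha_{i,j}^{ms}$ of case (b).

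For $r = 1$, Lemma 3.3(ii) combined with the identity $(-1)^{ms+2} = -(-1)^{ms+1}$ shows directly that $\beta_{i,j+1}^{ms+1} + \gamma_{i,j}^{ms+1}$ lies in $\Ker \widehat{\partial}^{ms+2}$ for every $0 \le i \le s-1$ and $0 \le j \le ms$. In case (c), rank-nullity with Corollary 3.5 gives $\dim \Ker \widehat{\partial}^{ms+2} = 2s(ms+2) - s(ms+3) = s(ms+1)$, matching exactly the number of these manifestly independent elements, so they form a basis. In case (d), Corollary 3.5 forces an additional $ms+3$ dimensions; I would check that $\sum_{i} \beta_{i,j}^{ms+1}$ and $\sum_{i} \gamma_{i,j}^{ms+1}$ are each killed (the cyclic shift fixes $\sum_{i} \delta_{i,j}^{ms+2}$, so the coefficient $1+\epsilon$ vanishes whenever $\epsilon = -1$ or ${\rm char}\, K = 2$), and then drop the index $i = s-1$ from the middle family to eliminate the single relation $\sum_{i}(\beta_{i,j+1}^{ms+1} + \gamma_{i,j}^{ms+1}) = \sum_i \beta_{i,j+1}^{ms+1} + \sum_i \gamma_{i,j}^{ms+1}$ for each $j$, producing independent elements with the correct total count $s(ms+1) + (ms+3)$.

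The only real obstacle is parity and characteristic bookkeeping — the sign $(-1)^{ms+1}$ equals $+1$ only when both $m$ and $s$ are odd, and the collapse of $+$ and $-$ in characteristic $2$ must be absorbed into cases (b) and (d) — but once this split is laid out cleanly, every sub-claim reduces either to a one-line telescoping identity or to a dimension match against Corollary 3.5, matching the lemma's own remark that only easy computations are required.
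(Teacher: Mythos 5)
Your proposal is correct: it carries out exactly the ``easy computations'' the paper leaves implicit, namely reducing each congruence class $r$ to linear algebra over the bases of Lemma 3.2 via the differential formulas of Lemma 3.3, and settling cases (c) and (d) by combining manifestly independent kernel elements with the dimension counts from Lemma 3.4/Corollary 3.5. The sign analysis $\epsilon=(-1)^{ms+1}$, the circulant argument for $r=0$, and the count $s(ms+1)+(ms+3)$ for case (d) all check out, so this is the paper's intended argument made explicit.
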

\noindent
As an immediate consequence, 
we get the dimension of $\Ker\widehat{\partial}^{n+1}$ for $n\geq 1$: 
 \begin{cor}
  Let $n=ms+r$ for integers $m\geq 0$ and $0\leq r \leq s-1$. 
  Then the dimension of $\Ker\widehat{\partial}^{n+1}$ is as follows{\rm :}
  \begin{align*}
   &\dim_{K}\Ker\widehat{\partial}^{ms+r+1}\\
   &=\left\{
     \begin{array}{ll}
      0                   &\quad \text{if}\ s\ \text{odd},\ m\ \text{odd},\ {\rm char}\, K\neq2\ \text{and}\ r=0, \\
      ms+1                &\quad \text{if}\ s\ \text{even}\ \text{and}\ r=0,\ \text{if}\ m\ \text{even\ and}\ r=0, \text{or}\\
                          &\quad \quad \quad \text{if}\ {\rm char}\,K=2\ \text{and}\ r=0, \\
      s(ms+1)             &\quad \text{if}\ s\ \text{odd},\ m\ \text{odd},\ {\rm char}\,K\neq2\ \text{and}\ r=1, \\
      (s+1)(ms+1)+2       &\quad \text{if}\ s\ \text{even\ and\ } r=1,\ \text{if}\ m\ \text{even\ and}\ r=1, \text{or}\\
                          &\quad \quad \quad \text{if}\ {\rm char\,}K=2\ \text{and}\ r=1, \\
      s(ms+3)             &\quad \text{if}\ r=2, \\
      0                   &\quad otherwise. 
     \end{array}
   \right.
   \end{align*}
 \end{cor}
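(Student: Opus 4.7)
The plan is to derive Corollary 3.7 as an immediate consequence of Lemma 3.6 by counting, case by case, the number of basis vectors it provides for $\Ker\widehat{\partial}^{n+1}$, together with one appeal to Lemma 3.2 in the case where the kernel coincides with the whole space $\widehat{Q}^{ms+2}$. Since each clause of Lemma 3.6 furnishes an explicit $K$-basis, dimension computation reduces to bookkeeping.

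Writing $n=ms+r$ with $0\le r\le s-1$, I would split the argument according to $r$. For $r=0$, part (a) of Lemma 3.6 yields dimension $0$ in the ``$s$ odd, $m$ odd, $\mathrm{char}\,K\neq 2$'' case, while part (b) lists the elements $\sum_{i=0}^{s-1}\alpha_{i,j}^{ms}$ for $0\le j\le ms$, producing $ms+1$ basis vectors. For $r=1$, part (c) lists the elements $\beta_{i,j+1}^{ms+1}+\gamma_{i,j}^{ms+1}$ indexed by $(i,j)$ with $0\le i\le s-1$ and $0\le j\le ms$, giving $s(ms+1)$; part (d) combines three families, contributing $(ms+2)+(s-1)(ms+1)+(ms+2)$ vectors, which simplifies to $(s+1)(ms+1)+2$. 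For $r=2$, part (e) identifies $\Ker\widehat{\partial}^{ms+3}$ with the full space $\widehat{Q}^{ms+2}$, whose basis from Lemma 3.2 (the $\delta_{k,l}^{ms+2}$ with $0\le k\le s-1$ and $0\le l\le ms+2$) has cardinality $s(ms+3)$. Finally, for $r\notin\{0,1,2\}$, part (f) gives dimension $0$.

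There is no genuine obstacle here; the argument is pure accounting. The only care required is to match the disjunctive hypotheses of Corollary 3.7 (the combinations of ``$s$ odd/even'', ``$m$ odd/even'', and ``$\mathrm{char}\,K=2$'') with the corresponding clauses of Lemma 3.6, and to verify the arithmetic simplification in case (d), namely $(s-1)(ms+1)+2(ms+2)=(s+1)(ms+1)+2$. Once these are checked, the six sub-cases of the corollary line up one-to-one with the six clauses of Lemma 3.6 (with case (e) combined with the dimension count of $\widehat{Q}^{ms+2}$), completing the proof.
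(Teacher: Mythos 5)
Your proposal is correct and follows exactly the paper's (implicit) argument: the corollary is obtained by counting the basis elements listed in each clause of Lemma 3.6, using Lemma 3.2 to get $\dim_K\widehat{Q}^{ms+2}=s(ms+3)$ in the case $r=2$, and your arithmetic check $(ms+2)+(s-1)(ms+1)+(ms+2)=(s+1)(ms+1)+2$ for clause (d) is accurate.
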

\subsection{Calculation of the Hochschild cohomology groups $\HH^{n}(\Lambda_{s})$}
Now, by Lemmas 3.4 and 3.6, 
we have a $K$-basis of the Hochschild cohomology group $\HH^{n}(\Lambda_{s})$ for $n\geq 0$. 
 \begin{prop}
  Let $n=ms+r$ for integers $m\geq 0$ and $0\leq r \leq s-1$. 
  Then the following elements give a $K$-basis of $\HH^{n}(\Lambda_{s})$ of $\Lambda_{s}${\rm :}
   \begin{enumerate}[\rm(a)]
    \item $\HH^{ms}(\Lambda_{s})=0$ for $s$ odd, $m$ odd, and {\rm char}\,$K\neq2$. 
    \item $\sum_{i=0}^{s-1}\alpha_{i,j}^{ms}$ $(0\leq j\leq ms)$ 
          is a $K$-basis of $\HH^{ms}(\Lambda_{s})$ for $s$ even, $m$ even, or {\rm char}\,$K=2$. 
    \item $\HH^{ms+1}(\Lambda_{s})=0$ for $s$ odd, $m$ odd, and {\rm char }\,$K\neq2$. 
    \item $\sum_{i=0}^{s-1}\beta_{i,0}^{ms+1}$, $\sum_{i=0}^{s-1}\gamma_{i,j}^{ms+1}$ $(0\leq j \leq ms+1)$, $\beta_{s-1,j+1}^{ms+1}+\gamma_{s-1,j}^{ms+1}$ $(0\leq j\leq ms)$ 
          is a $K$-basis of $\HH^{ms+1}(\Lambda_{s})$ for $s$ even, $m$ even, or {\rm char}\,$K=2$. 
    \item $\HH^{ms+2}(\Lambda_{s})=0$ for $s$ odd, $m$ odd and {\rm char } $K\neq2$. 
    \item $\delta_{s-1,j}^{ms+2}$ $(0\leq j\leq ms+2)$ 
          is a $K$-basis of $\HH^{ms+2}(\Lambda_{s})$ for $s$ even, $m$ even, or {\rm char}\,$K=2$, 
    \item $\HH^{ms+r}(\Lambda_{s})=0$ for $r\neq0,1,2$. 
   \end{enumerate}
 \end{prop}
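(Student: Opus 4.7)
The plan is to work directly from the formula $\HH^{n}(\Lambda_{s})=\Ker\widehat{\partial}^{n+1}/\Ima\widehat{\partial}^{n}$, combining Lemma 3.6 (bases of the kernels) with Lemma 3.4 applied with index shifted down by one (bases of the images), then taking the quotient case by case according to the residue $r=n\bmod s$ and the parity/characteristic conditions splitting each lemma.

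First I dispose of the easy residues. For $r\not\in\{0,1,2\}$, Lemma 3.6(f) gives $\Ker\widehat{\partial}^{n+1}=0$, yielding part (g). For $r=0$, write $n=ms$; then the preceding differential $\widehat{\partial}^{ms}$ issues from $\widehat{Q}^{ms-1}$, whose index has residue $s-1\geq 2$, so by Lemma 3.3 we have $\widehat{\partial}^{ms}=0$, hence $\HH^{ms}(\Lambda_{s})=\Ker\widehat{\partial}^{ms+1}$, and Lemma 3.6(a)--(b) delivers parts (a) and (b). For $r=2$, Lemma 3.6(e) identifies the kernel with the whole of $\widehat{Q}^{ms+2}$, whose basis is $\{\delta_{k,l}^{ms+2}\}$; in the sub-case where Lemma 3.4(c) gives the $s(ms+3)$-dimensional image that fills up the whole kernel by Corollaries 3.5 and 3.7 the quotient vanishes, giving (e), while in the complementary sub-case Lemma 3.4(d) produces the image as the span of $\delta_{i,j}^{ms+2}+\delta_{i-1,j}^{ms+2}$ for $0\leq i\leq s-2$, so $\{[\delta_{s-1,j}^{ms+2}]\mid 0\leq j\leq ms+2\}$ is a basis of the quotient, giving (f).

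The genuine calculation is the case $r=1$. Write $v_{i,j}:=\beta_{i,j+1}^{ms+1}+\gamma_{i,j}^{ms+1}$. In the easy sub-case (part (c)), Corollaries 3.5 and 3.7 give $\dim\Ima=\dim\Ker=s(ms+1)$, and since $\Ima\subseteq\Ker$ is automatic from $\widehat{\partial}^{ms+2}\widehat{\partial}^{ms+1}=0$, equality forces $\HH^{ms+1}(\Lambda_{s})=0$. The non-trivial sub-case (part (d)) is the main obstacle. The kernel basis from Lemma 3.6(d) consists of the $v_{i,j}$ for $0\leq i\leq s-2$ together with the column sums $B_{j}:=\sum_{i}\beta_{i,j}^{ms+1}$ and $G_{j}:=\sum_{i}\gamma_{i,j}^{ms+1}$, while Lemma 3.4(b) describes the image as spanned by $v_{i,j}-v_{i-1,j}$ for $0\leq i\leq s-2$. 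Two consequences follow in the quotient: the chain of relations $v_{0,j}\equiv v_{1,j}\equiv\cdots\equiv v_{s-1,j}\pmod{\Ima}$ collapses each $j$-row of $v$'s to a single class, for which I take $[v_{s-1,j}]$; and the identity $\sum_{i=0}^{s-1}v_{i,j}=B_{j+1}+G_{j}$, combined with the $i=0$ image relation $v_{0,j}\equiv v_{s-1,j}$, forces $s\,[v_{s-1,j}]\equiv[B_{j+1}]+[G_{j}]$ for each $0\leq j\leq ms$, letting me eliminate $[B_{j+1}]$. What remains is $[B_{0}]$, all $[G_{j}]$ $(0\leq j\leq ms+1)$ and $[v_{s-1,j}]$ $(0\leq j\leq ms)$, precisely the list in (d).

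A final tally $1+(ms+2)+(ms+1)=2ms+4$ against $\dim\Ker-\dim\Ima=((s+1)(ms+1)+2)-(s-1)(ms+1)=2ms+4$ from Corollaries 3.5 and 3.7 confirms the set is indeed a basis and that no independent class has been overlooked.
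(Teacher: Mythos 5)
Your proof is correct and follows essentially the same route as the paper, which obtains Proposition 3.8 directly by combining the kernel bases of Lemma 3.6 with the image bases of Lemma 3.4 (together with the dimension counts of Corollaries 3.5 and 3.7) and passing to the quotient case by case; your explicit eliminations in the $r=1$ and $r=2$ cases are exactly the computations the paper leaves implicit.
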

By Proposition 3.8, we have the following theorem. 
 \begin{thm}
  Let $n=ms+r$ for integers $m\geq 0$ and $0\leq r \leq s-1$. 
  Then, for $s\geq 3$, we have the dimension formula for $\HH^{n}(\Lambda_{s})${\rm :} 
  \begin{align*}
   &\dim_{K}\HH^{ms+r}(\Lambda_{s})\\
   &=\left\{
    \begin{array}{ll}
      ms+1          &\quad \text{if}\ s\ \text{even}\ \text{and}\ r=0,\ \text{if}\ m\ \text{even\ and}\ r=0, \text{or}\\
                    &\quad \quad \quad \text{if}\ {\rm char}\,K=2\ \text{and}\ r=0, \\
      2ms+4         &\quad \text{if}\ s\ \text{even\ and\ } r=1,\ \text{if}\ m\ \text{even\ and}\ r=1, \text{or}\\
                    &\quad \quad \quad \text{if}\ {\rm char}\,K=2\ \text{and}\ r=1, \\
      ms+3          &\quad \text{if}\ s\ \text{even\ and\ } r=2,\ \text{if}\ m\ \text{even\ and}\ r=2, \text{or}\\
                    &\quad \quad \quad \text{if}\ {\rm char}\,K=2\ \text{and}\ r=2, \\
      0             &\quad otherwise. 
    \end{array}
   \right.
  \end{align*}
 \end{thm}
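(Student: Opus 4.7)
The statement is essentially a counting corollary of Proposition 3.8, which already exhibits an explicit $K$-basis of $\HH^{ms+r}(\Lambda_s)$ in each case. The plan is therefore to tally the cardinalities of those bases and check that they match the formula, proceeding case by case with the same partition of parity/characteristic conditions used in Proposition 3.8.

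For $r=0$, Proposition 3.8(a) gives $\HH^{ms}(\Lambda_s)=0$ in the ``$s$ odd, $m$ odd, $\mathrm{char}\,K\neq 2$'' case, and 3.8(b) supplies the basis $\{\sum_{i=0}^{s-1}\alpha_{i,j}^{ms}:0\leq j\leq ms\}$ of cardinality $ms+1$ otherwise. For $r=1$, the nontrivial case uses 3.8(d), whose basis splits into: one element $\sum_{i=0}^{s-1}\beta_{i,0}^{ms+1}$; the $ms+2$ elements $\sum_{i=0}^{s-1}\gamma_{i,j}^{ms+1}$ for $0\leq j\leq ms+1$; and the $ms+1$ elements $\beta_{s-1,j+1}^{ms+1}+\gamma_{s-1,j}^{ms+1}$ for $0\leq j\leq ms$, totalling $1+(ms+2)+(ms+1)=2ms+4$. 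For $r=2$, the nontrivial case is 3.8(f), with basis $\{\delta_{s-1,j}^{ms+2}:0\leq j\leq ms+2\}$ of size $ms+3$. All remaining values of $r$ fall under 3.8(g) and contribute dimension $0$. As an independent consistency check, the same totals can be obtained via $\dim_K \Ker\widehat{\partial}^{n+1}-\dim_K \Ima\widehat{\partial}^{n}$ using Corollaries 3.5 and 3.7, whose parity/characteristic partitions coincide with those above.

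There is no genuine obstacle remaining at this stage: the substantive work was already carried out in Lemmas 3.4 and 3.6, where explicit bases of $\Ima\widehat{\partial}^{n+1}$ and $\Ker\widehat{\partial}^{n+1}$ were identified, and then assembled into Proposition 3.8. The only points requiring care are faithfully matching the several ``$s$ even, or $m$ even, or $\mathrm{char}\,K=2$'' clauses across statements, and verifying the arithmetic in the $r=1$ case, where three disjoint families of basis vectors contribute to the total $2ms+4$.
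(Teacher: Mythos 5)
Your proposal is correct and matches the paper's own argument: Theorem 3.9 is stated there as an immediate consequence of Proposition 3.8, obtained exactly by counting the listed basis elements in each parity/characteristic case ($ms+1$, then $1+(ms+2)+(ms+1)=2ms+4$, then $ms+3$, and $0$ otherwise). Your cross-check via $\dim_K\Ker\widehat{\partial}^{n+1}-\dim_K\Ima\widehat{\partial}^{n}$ with Corollaries 3.5 and 3.7 is consistent but not needed beyond what the paper does.
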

\section{The Hochschild cohomology ring modulo nilpotence 
         $\HH^{\ast}(\Lambda_{s})/\mathcal{N}_{s}$}
Throughout this section, 
we keep the notation from Sections 2 and 3. 
Recall that the Hochschild cohomology ring 
of the algebra $\Lambda_{s}$ is defined to be the graded ring 
$$
\HH^{\ast}(\Lambda_{s}):=\Ext_{\Lambda_{s}^{e}}^{\ast}(\Lambda_{s},\Lambda_{s})=\bigoplus_{t\geq 0} \Ext_{\Lambda_{s}^{e}}^{t}(\Lambda_{s},\Lambda_{s})
$$
with the Yoneda product. 
Denote $\mathcal{N}_{\Lambda_{s}}$ by the ideal generated 
by all homogeneous nilpotent elements 
in $\HH^{\ast}(\Lambda_{s})$. 
Then the quotient algebra $\HH^{\ast}(\Lambda_{s})/\mathcal{N}_{\Lambda_{s}}$ 
is called the Hochschild cohomology ring modulo nilpotence of $\Lambda_{s}$. 
Note that $\HH^{\ast}(\Lambda_{s})/\mathcal{N}_{\Lambda_{s}}$ 
is a commutative graded algebra (see \cite{SnSo}). 
Our purpose of this section is to find generators and relations 
of $\HH^{\ast}(\Lambda_{s})/\mathcal{N}_{\Lambda_{s}}$ for $s\geq 3$. 
For simplicity, 
we denote the graded subalgebras $\bigoplus_{t\geq 0}\HH^{st}(\Lambda_{s})$ 
of $\HH^{\ast}(\Lambda_{s})$ by $\HH^{s\ast}(\Lambda_{s})$ 
and $\bigoplus_{t\geq 0}\HH^{2st}(\Lambda_{s})$ by $\HH^{2s\ast}(\Lambda_{s})$. 
Also, we denote the Yoneda product in $\HH^{\ast}(\Lambda_{s})$ by $\times$. 
Note that, by Lemma 3.3, 
$\Ima\widehat{\partial}^{st}=0$ and so 
$\HH^{st}(\Lambda_{s})=\Ker \widehat{\partial}^{st+1}$ 
for $s\geq 3$ and $t\geq 0$. 
 \begin{thm}
  For $s\geq 3$, 
  there are the following isomorphisms of commutative graded algebras{\rm :}
   \begin{enumerate}[\rm (i)]
    \item If $s$ is odd and {\rm char}\,$K\neq 2$, then 
           \begin{align*}
           &\HH^{\ast}(\Lambda_{s})/\mathcal{N}_{\Lambda_{s}}\cong \HH^{2s\ast}(\Lambda_{s})\\
           &\cong K[z_{0},\ldots,z_{2s}]/\langle z_{k}z_{l}-z_{q}z_{r}\ |\ k+l=q+r,\,0\leq k,l,q,r \leq 2s\rangle, 
           \end{align*}
          where $z_{0},\ldots,z_{2s}$ are in degree $2s$. 
    \item If $s$ is even or {\rm char}\,$K=2$, then 
           \begin{align*}
            &\HH^{\ast}(\Lambda_{s})/\mathcal{N}_{\Lambda_{s}}\cong \HH^{s\ast}(\Lambda_{s})\\
            &\cong K[z_{0},\ldots,z_{s}]/\langle z_{k}z_{l}-z_{q}z_{r}\ |\ k+l=q+r,\,0\leq k,l,q,r \leq s\rangle, 
            \end{align*}
           where $z_{0},\ldots,z_{s}$ are in degree $s$. 
   \end{enumerate}
  Therefore, $\HH^{\ast}(\Lambda_{s})/\mathcal{N}_{\Lambda_{s}}$ 
  is finitely generated as an algebra. 
 \end{thm}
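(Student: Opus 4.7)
The plan is to identify $\HH^{\ast}(\Lambda_s)/\mathcal{N}_{\Lambda_s}$ with the graded subalgebra $\HH^{\varepsilon s\,\ast}(\Lambda_s)$, where $\varepsilon=1$ in case (ii) and $\varepsilon=2$ in case (i), and then to present this subalgebra as the homogeneous coordinate ring of a rational normal curve of degree $\varepsilon s$. By Proposition 3.8 the non-vanishing Hochschild cohomology is already confined to degrees of the form $ms+r$ with $r\in\{0,1,2\}$: the $r=0$ classes are represented by sums of $\alpha$-cochains whose values are the trivial paths $e_{i}$, whereas the $r=1,2$ classes are represented by $\beta,\gamma,\delta$-cochains whose values are the non-trivial paths $e_{i}x$, $e_{i}y$, $e_{i}xy$. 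I expect the former to be the polynomial generators and the latter to be nilpotent.

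For $0\le j\le \varepsilon s$ set $\eta_{j}:=\sum_{i=0}^{s-1}\alpha_{i,j}^{\varepsilon s}\in\HH^{\varepsilon s}(\Lambda_s)$. The central construction is to lift $\eta_{j}$ to a $\Lambda_s^{e}$-chain map $\widetilde{\eta}_{j}:Q^{\bullet+\varepsilon s}\to Q^{\bullet}$ by
\[
\widetilde{\eta}_{j}(b_{i,k}^{n+\varepsilon s})=
\begin{cases} b_{i,\,k-j}^{n} & \text{if }0\le k-j\le n,\\ 0 & \text{otherwise.}\end{cases}
\]
Checking $\partial\circ\widetilde{\eta}_{j}=\widetilde{\eta}_{j}\circ\partial$ via Definition 2.5 and Lemma 2.3 is routine provided $(-1)^{\varepsilon s}=1$, which holds in every case: $\varepsilon s=2s$ is even in case (i), while in case (ii) either $s$ is even or $\mathrm{char}\,K=2$. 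Composing chain lifts then yields the clean Yoneda product rule
\[
\eta_{j}\times\eta_{k}=\sum_{i=0}^{s-1}\alpha_{i,\,j+k}^{2\varepsilon s}\qquad(0\le j+k\le 2\varepsilon s),
\]
so $\eta_{j}\times\eta_{k}$ depends only on $j+k$; iteration gives $\eta_{j_{1}}\times\cdots\times\eta_{j_{m}}=\sum_{i}\alpha_{i,\,j_{1}+\cdots+j_{m}}^{m\varepsilon s}$. This defines a graded algebra homomorphism
\[
\Phi\colon K[z_{0},\ldots,z_{\varepsilon s}]/\langle z_{k}z_{l}-z_{q}z_{r}\mid k+l=q+r\rangle\longrightarrow \HH^{\varepsilon s\,\ast}(\Lambda_s),\qquad z_{j}\mapsto\eta_{j}.
\]
Both sides have Hilbert function $m\varepsilon s+1$ in degree $m$ (the right-hand side by Theorem 3.9, the left by the standard monomial count on the rational normal curve of degree $\varepsilon s$); the product rule makes $\Phi$ surjective in each degree, and the dimension match forces it to be an isomorphism.

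Finally, I argue that every $\beta$-, $\gamma$- or $\delta$-class lies in $\mathcal{N}_{\Lambda_s}$. A chain lift of such a class sends each generator $b_{i,k}^{n}$ to an element ending in the arrow $x$, the arrow $y$, or the path $xy$; consequently the second Yoneda power factors through $x^{2}$, $y^{2}$, or $(xy)^{2}=x(yx)y$, all of which lie in $I=\langle x^{2},\,xy+yx,\,y^{2}\rangle$ (for $\delta$-classes a third power suffices). Hence every such class is nilpotent, and the quotient $\HH^{\ast}(\Lambda_s)/\mathcal{N}_{\Lambda_s}$ therefore coincides with the image of $\Phi$, proving both parts of the theorem; finite generation is visible from the presentation by $\varepsilon s+1$ generators in a single degree.

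The principal obstacle I anticipate is the sign- and boundary-index bookkeeping in verifying the chain-map identity for $\widetilde{\eta}_{j}$ at the extremes $j=0$ and $j=\varepsilon s$ where the formula truncates, together with the nilpotency step: one must rule out any surviving $\alpha$-contribution in the Yoneda squares of $\delta$-classes, since a priori such a residual piece could obstruct the identification of $\HH^{\ast}/\mathcal{N}_{\Lambda_s}$ with the pure polynomial subalgebra $\HH^{\varepsilon s\,\ast}(\Lambda_s)$.
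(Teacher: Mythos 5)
Your construction of the degree-$\varepsilon s$ part is essentially the paper's own argument: your classes $\eta_j=\sum_i\alpha_{i,j}^{\varepsilon s}$ and the lifting $b_{i,k}^{n+\varepsilon s}\mapsto b_{i,k-j}^{n}$ are exactly the elements $z_u$ and the chain maps $\theta_u^v$ used in the paper, the product rule is the same, and your Hilbert-function count is just a repackaging of the paper's observation that two products coincide if and only if their index sums do. Your remark that the lifting commutes with the differentials because $(-1)^{\varepsilon s}=1$ in each case is a nice way of making precise why case (ii) goes through with degree-$s$ generators, which the paper dismisses with ``the proof of (ii) is similar.''

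The genuine gap is the nilpotence step. You claim that since the $\beta,\gamma,\delta$ cocycles take values ending in $x$, $y$ or $xy$, their Yoneda squares ``factor through'' $x^{2}$, $y^{2}$, $(xy)^{2}\in I$. But a Yoneda product is a cocycle composed with a \emph{lifting} of the other cocycle through the resolution, not a product of cocycle values, and you construct no liftings of any $\beta$-, $\gamma$- or $\delta$-class; liftings of radical-valued cocycles have components with unit coefficients on one side, so the composite can retain nonzero values. Your own heuristic shows it cannot be trusted: applied to a product of a $\beta$-class with a $\gamma$-class it would ``factor through'' $xy\neq 0$, so the heuristic by itself cannot decide which products vanish. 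Concretely, for $s=4$ (case (ii)) a $\delta$-class has even degree $ms+2$ and its square lies in $\HH^{s(2m+2)}(\Lambda_{s})\neq 0$, so its vanishing --- equivalently the nilpotence of the class --- is precisely what must be proved and does not follow from $(xy)^{2}=0$ in $\Lambda_{s}$; similarly for $s=3$, ${\rm char}\,K=2$, squares of $\beta,\gamma$-classes land in nonzero groups. The paper closes this step by noting that all these basis cocycles have image in $\rad\Lambda_{s}$ and invoking \cite[Proposition 4.4]{SnSo}; you should either cite that result, or use graded-commutativity (odd-degree classes square to zero when ${\rm char}\,K\neq2$) together with degreewise vanishing where it applies, or actually compute the relevant liftings. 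Finally, for the first isomorphism you also need $\mathcal{N}_{\Lambda_{s}}\cap\HH^{\varepsilon s\ast}(\Lambda_{s})=0$; this follows because your presentation shows $\HH^{\varepsilon s\ast}(\Lambda_{s})$ is reduced while every element of $\mathcal{N}_{\Lambda_{s}}$ is nilpotent, but it should be stated rather than absorbed into ``the quotient coincides with the image of $\Phi$.''
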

 \begin{proof}
  We prove (i) only. The proof of (ii) is similar. 
  First, we construct the second isomorphism in the statement. 
  For $0 \leq u \leq 2s$, 
  we set $z_{u}:=\sum_{i=0}^{s-1}\alpha_{i,u}^{2s}$: 
  $Q^{2s}\rightarrow \Lambda_{s}$, 
  where $\alpha_{i,u}^{2s}$ is the map defined in Section 3.1. 
  For $0 \leq u \leq 2s$ and $v\geq 0$, 
  we define a $\Lambda_{s}^{e}$-module homomorphism 
  $\theta_{u}^{v}$: 
  $Q^{2s+v}\rightarrow Q^{v}$ by 
   $$
    b_{k,l}^{2s+v}\longmapsto 
     \left\{
      \begin{array}{ll}
      b_{k, w}^{v} &\quad \text{if }l=u+w\,\text{for some integer }w\text{ with }0\leq w \le v,\\
      0            &\quad \text{otherwise}
      \end{array}
     \right.
   $$
  for $0\leq k\leq s-1$ and $0\leq l\leq 2s+v$. 
  Then $z_{u}=\partial^{0}\theta_{u}^{0}$ and 
  $\theta_{u}^{v}\partial^{2s+v+1}=\partial^{v+1}\theta_{u}^{v+1}$ 
  hold for $0\leq u \leq 2s$ and $v\geq 0$, 
  and so $\theta_{u}^{v}$ is a lifting of $z_{u}$ $(0\leq u\leq 2s)$. 
  Also, it follows that, for any integers $0\leq u_{1},u_{2}\leq 2s$, 
  the composite $z_{u_{2}}\theta_{u_{1}}^{2s}$: $Q^{4s}\rightarrow \Lambda_{s}$ 
  is given by 
  $$
  b_{k,l}^{4s} \longmapsto 
  \left\{
   \begin{array}{ll}
   e_{k} &\quad \text{if }l=u_{1}+u_{2}, \\
   0     &\quad \text{otherwise}
   \end{array}
   \right.
  $$
  for $0\leq k\leq s-1$ and $0\leq l\leq 4s$. 
  Hence we have $z_{u_{2}}\theta_{u_{1}}^{2s}=\sum_{i=0}^{s-1}\alpha_{i,u_{1}+u_{2}}^{4s}$, 
  and this equals the Yoneda product $z_{u_{1}}\times z_{u_{2}}\in \HH^{4s}(\Lambda_{s})$. 
  
  Now, let $t\geq 2$ be a positive integer, and 
  let $u_{1},\ldots,u_{t}$ be integers with $0\leq u_{1},\ldots,u_{t}\leq 2s$. 
  Then it is proved by induction on $t$ that the product 
  $z_{u_{1}}\times \cdots \times z_{u_{t}}$ 
  equals the map 
  $$
  Q^{2st}\rightarrow \Lambda_{s};\,b_{k,l}^{2st}\longmapsto 
  \left\{
    \begin{array}{ll}
     e_{k} &\quad \text{if }l=\sum_{p=1}^{t} u_{p}, \\
     0     &\quad \text{otherwise}
   \end{array}
  \right.
  $$
  for $0\leq k\leq s-1$ and $0\leq l\leq 2st$, 
  which equals the map $\sum_{i=0}^{s-1}\alpha_{i,\sum_{p=1}^{t}u_{p}}^{2st}$. 
  Therefore, by Proposition 3.8 (b), 
  we see that $\HH^{2s\ast}(\Lambda_{s})$ 
  is generated by $z_{0},\ldots,z_{2s}\in \HH^{2s}(\Lambda_{s})$. 
  
  Let $t\geq 2$ be an integer, 
  and let $z_{u_{1}}\times \cdots \times z_{u_{t}}$ and 
  $z_{u'_{1}}\times \cdots \times z_{u'_{t}}$
  be any products  in $\HH^{2st}(\Lambda_{s})$ 
  for $0\leq u_{p},u'_{p} \leq 2s$ $(1\leq p \leq t)$. 
  Then, since $z_{u_{1}}\times \cdots \times z_{u_{t}}=\sum_{i=0}^{s-1}\alpha_{i,\sum_{p=1}^{t}u_{p}}^{2st}$
  and $z_{u'_{1}}\times \cdots \times z_{u'_{t}}=\sum_{i=0}^{s-1}\alpha_{i,\sum_{p=1}^{t}u'_{p}}^{2st}$, 
  it follows that 
  $z_{u_{1}}\times \cdots \times z_{u_{t}}=z_{u'_{1}}\times \cdots \times z_{u'_{t}}$ 
  if and only if $\sum_{p=1}^{t}u_{p}=\sum_{p=1}^{t}u'_{p}$. 
  This means that the relations $z_{k}z_{l}-z_{q}z_{r}=0$ 
  for every $0\leq k,l,q,r \leq 2s$ with $k+l=q+r$ 
  are enough to give the second isomorphism. 
  
  Now, using the second isomorphism, 
  we easily see that all elements in $\HH^{2s\ast}(\Lambda_{s})$ are not nilpotent. 
  Furthermore, for $t\geq 0$ and $r=1,\ldots ,s-1$, 
  the image of all basis elements of $\HH^{2st+r}(\Lambda_{s})$ described in 
  Proposition 3.8 are in $\rad \Lambda_{s}$, 
  so that by \cite[Proposition 4.4]{SnSo}, 
  $\HH^{2st+r}(\Lambda_{s})$ is contained in $\mathcal{N}_{\Lambda_{s}}$. 
  Hence we have the first isomorphism. 
 \end{proof}
We conclude this paper with the following remarks. 
\begin{rem}
Let $E(\Lambda_{s})=\bigoplus_{i\geq 0}\Ext_{\Lambda_{s}}^{i}(\Lambda_{s}/\rad \Lambda_{s},\Lambda_{s}/\rad \Lambda_{s})$ 
      be the Ext algebra of $\Lambda_{s}$, 
      and let $Z_{gr}(E(\Lambda_{s}))$ be the graded center of $E(\Lambda_{s})$ 
      (see \cite{BGSS}, for example). 
      Denote by $\mathcal{N}'_{\Lambda_{s}}$ the ideal of $Z_{gr}(E(\Lambda_{s}))$ 
      generated by all homogeneous nilpotent elements. 
      Since $\Lambda_{s}$ is a Koszul algebra by Proposition 2.2, 
      it follows by \cite{BGSS} that 
      $Z_{gr}(E(\Lambda_{s}))/\mathcal{N}'_{\Lambda_{s}}\cong \HH^{\ast}(\Lambda_{s})/\mathcal{N}_{\Lambda_{s}}$ 
      as graded rings. 
      Therefore, we have the same presentation of 
      $Z_{gr}(E(\Lambda_{s}))/\mathcal{N}'_{\Lambda_{s}}$ 
      by generators and relations as that in Theorem 4.1. 
\end{rem}
\begin{rem}
      In \cite{F}, Furuya has discussed the Hochschild cohomology of some self-injective 
      special biserial algebra $A_{T}$ for $T\geq 0$, 
      and in particular he has given a presentation of $\HH^{\ast}(A_{T})/\mathcal{N}_{A_{T}}$ 
      by generators and relations in the case $T=0$. 
      We easily see that the algebra $\Lambda_{4}$ is isomorphic to $A_{0}$. 
      By setting $s=4$ in Theorem 4.1, 
      our presentation actually coincides with that in \cite[Theorem 4.1]{F}. 
\end{rem}
\section*{Acknowledgments}
The author is grateful to Professor Katsunori Sanada 
and Professor Takahiko Furuya for many helpful comments and suggestions 
on improving the clarity of the paper. 

\end{document}